\numberwithin{equation}{section}
\definecolor{grey}{rgb}{.7,.7,.7}
\definecolor{refkey}{gray}{.45}
\definecolor{labelkey}{gray}{.45}
\newtheorem{theorem}{Theorem}[section]
\newtheorem{proposition}[theorem]{Proposition}
\newtheorem{lemma}[theorem]{Lemma}
\theoremstyle{remark}
\newtheorem{remark}[theorem]{Remark}
\theoremstyle{definition}
\newtheorem{example}[theorem]{Example}
\newcommand{\e}{\varepsilon}
\newcommand{\N}{\mathbb N}
\newcommand{\R}{\mathbb R}
\newcommand{\dd}{\,\mathrm{d}}
\newcommand{\Hd}{\mathcal{H}^{d-1}}
\renewcommand{\setminus}{\backslash}
\newcommand{\defeq}{\coloneqq}
\newcommand{\per}{\mathcal{P}}
\newcommand{\en}{\mathcal{F}_{\varepsilon}}
\newcommand{\enm}{\mathcal{F}_{\varepsilon,m}}
\newcommand{\nl}{\mathcal{J}}
\newcommand{\nla}{\mathcal{J}_{-\alpha}}
\newcommand{\nlb}{\mathcal{J}_{\beta}}
\newcommand{\epsglob}{\e_{\mathrm{ball}}}
\newcommand{\ba}{\begin{array}}
\newcommand{\ea}{\end{array}}
\newcommand{\bthm}{\begin{theorem}}
\newcommand{\ethm}{\end{theorem}}
\newcommand{\bprop}{\begin{proposition}}
\newcommand{\eprop}{\end{proposition}}
\newcommand{\blemma}{\begin{lemma}}
\newcommand{\elemma}{\end{lemma}}
\newcommand{\bexmpl}{\begin{example}}
\newcommand{\eexmpl}{\end{example}}
\newcommand{\beqn}{\begin{equation}}
\newcommand{\eeqn}{\end{equation}}
\newcommand{\beqns}{\begin{equation*}}
\newcommand{\eeqns}{\end{equation*}}
\newcommand{\pr}{\prime}
\newcommand{\pt}{\partial}
\newcommand{\ol}{\overline}
\renewcommand{\leq}{\leqslant}
\renewcommand{\geq}{\geqslant}
\definecolor{mygreen}{rgb}{0.1,0.75,0.2}
\newcommand{\veps}{\varepsilon}
\newcounter{myenumi}
\DeclareMathOperator{\loc}{loc}
\title[Attractive-repulsive energies with perimeter penalization]{Stability and minimality of the ball for attractive-repulsive energies with perimeter penalization}
\author{Marco Bonacini}
\address[Marco Bonacini]{Department of Mathematics, University of Trento, Italy}
\email{marco.bonacini@unitn.it}
\author{Ihsan Topaloglu}
\address[Ihsan Topaloglu]{Department of Mathematics and Applied Mathematics, Virginia Commonwealth University, Richmond, VA, USA}
\email{iatopaloglu@vcu.edu}
\date{\today}       
\thanks{This is a post-peer-review, pre-copyedit version of an article published in the Interfaces and Free Boundaries. The final authenticated version is available online at: \url{https://doi.org/10.4171/IFB/548}.}                                 
\subjclass[2020]{49Q10, 49Q20, 49K21, 70G75, 82B21, 82B24}
\keywords{Stability, Second variation, Attractive-repulsive energies, Power-law interaction kernels, Perimeter perturbation, Liquid drop model}
\begin{document}

\begin{abstract}
We consider perimeter perturbations of a class of attractive-repulsive energies, given by the sum of two nonlocal interactions with power-law kernels, defined over sets with fixed measure. We prove that there exists curves in the perturbation-volume parameters space that separate stability/instability and global minimality/non-minimality regions of the ball, and provide a precise description of these curves for certain interaction kernels. In particular, we show that in small perturbation regimes there are (at least) two disconnected regions for the mass parameter in which the ball is stable, separated by an instability region.  
\end{abstract}

\maketitle



\section{Introduction}\label{sec:intro}

In this note we consider the following minimization problem among sets of finite perimeter $E\subset\R^d$ with fixed volume ($d$-dimensional Lebesgue measure) $|E|=V$:
\begin{equation} \label{eq:min}
\min\Bigl\{ \en(E) \colon E\subset\R^d,\, |E|=V \Bigr\},
\end{equation}
where the functional $\en$ is a perimeter perturbation of a prototypical attractive-repulsive energy defined via power-law kernels
\begin{equation}\label{eq:energy}
\en(E)\defeq \veps\,\per(E)+\int_E\int_E \frac{1}{|x-y|^\alpha}\dd x\dd y + \int_E\int_E |x-y|^\beta\dd x\dd y, \quad \alpha\in(0,d),\,\beta>0.
\end{equation}
Here $\per(E)$ denotes the perimeter of the set $E$ in the sense of Caccioppoli–De Giorgi. For notational convenience we write
\begin{equation}\label{eq:nl_interaction}
\nl_\sigma(E)\defeq \int_E\int_E |x-y|^\sigma\dd x\dd y, \qquad \text{for } \sigma>-d.
\end{equation}
With this notation $\en(E)=\veps\,\per(E)+\nla(E)+\nlb(E)$.

Introduced in \cite{BurChoTop18}, purely nonlocal interaction energies of the form $\nla(E)+\nlb(E)$ arise in descriptions of systems of uniformly distributed interacting particles, in particular in models of collective behavior of many-agent systems related to swarming (see e.g. \cite{BernoffTopaz,FetecauHuangKolokolnikov,TopazBertozzi2}). From a physical point of view, the inclusion of a surface penalization in swarming or aggregation models is rather natural since in most physical and biological systems one would like to minimize the interaction between the bulk and the void. The goal of this note is to characterize the parameter regimes $(V,\e)$ in which the ball is stable or a global minimizer of the functional $\en$.

\medskip

Energies that include various combinations of the terms appearing in \eqref{eq:energy} have been widely studied in the literature. For the purely nonlocal interaction energy $\mathcal{F}_0=\nla+\nlb$, in the large volume regime, in \cite{FraLie21} the authors proved that for all $\beta>0$ and $0<\alpha<d-1$ there is a threshold $V_{\rm ball}\in(0,\infty)$ such that the ball with volume $V$ is the only (up to translation) minimizer of $\mathcal{F}_0$ for $V>V_{\rm ball}$. It is also observed in \cite{FraLie21}, for $\alpha\in[d-1,d)$ balls are never minimizers. Concerning the small volume regime, in \cite{BurChoTop18} it is shown that for $\beta=2$ and $\alpha\in[d-2,d)$ the energy $\mathcal{F}_0$ does not admit a minimizer for $V$ sufficiently small. In \cite{FraLie18} the same is proved for $d=3$, $\alpha=1$, and any $\beta>0$. In a recent joint work with Cristoferi \cite{BonCriTop24}, we investigate the \emph{stability and local optimality of the ball}, and characterize the exact range of volumes for which the ball is a volume-constrained stable set for $\mathcal{F}_0$ (in the sense of nonnegativity of the second variation of the energy with respect to smooth perturbations of the boundary of the ball). Moreover, we prove that the strict stability of the ball yields its (quantitative) local minimality among sets with the same volume and whose boundary is contained in a small uniform neighborhood of the boundary of the ball.

The energy $\per(E)+\nla(E)$, on the other hand, appears in Gamow’s liquid drop model for the atomic nucleus \cite{Ga1930}. As a prototypical geometric variational problem, where the competition between attraction and repulsion creates a dichotomy of existence and nonexistence depending on the volume constraint $|E|=V$, this model has received significant interest in the last two decades (see \cite{ChoMurTop17,Fra} for a review). Indeed, for any $\alpha\in (0,d)$ and for $V$ sufficiently small the energy $\per(E)+\nla(E)$ admits the ball as unique minimizer. On the other hand for $V$ sufficiently large the energy does not admit a minimizer. We refer to \cite{BoCr14,ChoRuo25,FFMMM,FraNam21,KnMu2014} for these results. More relevant to our work, in \cite{BoCr14,FFMMM} the authors investigate the stability of the ball. In particular, they establish a general criterion which states that critical configurations with positive second variation are $L^1$-local minimizers of the energy $\per(E)+\nla(E)$ for $\alpha\in (0,d)$. Moreover these configurations satisfy a stability inequality with respect to the $L^1$-norm. As a result, this criterion provides the existence of a (explicitly determined) critical threshold determining the interval of volumes for which the ball is a local minimizer.


A combination of isoperimetric and attractive-repulsive energies of the form \eqref{eq:energy} (possibly with fractional perimeter) has already been investigated by Ascione in \cite{Asc22}. There the author first proves that for $d\geq 2$, $\alpha\in(0,d)$, $\beta>0$ the problem \eqref{eq:min} admits a minimizer for $\e$ and $V$ sufficiently large, and then establishes that there exists a constant $V_0>0$ and a function $\tilde{\e}(V)$ such that the ball of volume $V$ is the unique (up to translations) minimizer of the problem \eqref{eq:min} for any $\e\geq \tilde{\e}(V)$ and $V>V_0$.

In a recent preprint \cite{RenWanWei}, Ren, Wang, and Wei consider a similar problem in $\R^2$. For $E\subset\R^2$ with $|E|=V$, they study the stability of a single disk for the energy functional
    \[
        \e \, \per(E) - \frac{1}{2\pi}\int_{E}\int_{E} \log|x-y| \dd x \dd y + \int_E \int_E |x-y|^2 \dd x \dd y.
    \]
In particular, they prove that for $V \geq 1/4$, the disk of area $V$ is stable for every $\e>0$ whereas for $V<1/4$ it is stable for $\e >\gamma_V^{-1}(\frac{V}{\pi})^{3/2}$ and unstable for $\e < \gamma_V^{-1}(\frac{V}{\pi})^{3/2}$, where $\gamma_V \defeq \big(\max \big\{ \frac{-1/(2n)+1/2-2V}{n^2-1} \colon n \geq 2 \big\}\big)^{-1}$. Here we establish a similar result for more general power-law kernels in any dimension $d\geq 2$.

\medskip

In the following, it will be convenient to normalize the volume constraint and to work with sets having the fixed mass $|E|=\omega_d$, where $\omega_d\defeq|B_1|$ denotes the volume of the ball in $\R^d$ with radius $1$. By a simple scaling argument this amounts to introducing a parameter $m=\frac{V}{\omega_d}$ in the functional $\en$. Namely,
    \[
        \en \Bigl( \Bigl(\frac{V}{\omega_d}\Bigr)^{1/d}E\Bigr) = \Bigl(\frac{V}{\omega_d}\Bigr)^{(d-1)/d} \enm(E),
    \]
where
\begin{equation}\label{eq:energy-scaled}
\enm(E)\defeq \veps\,\per(E) + m^{(d-\alpha+1)/d}\,\nla(E) + m^{(d+\beta+1)/d} \,\nlb(E)
\end{equation}
for $E\subset \R^d$ with $|E|=\omega_d$. This heuristically shows that for fixed (small) $\e$ we expect two disconnected regions of stability of the ball of volume $V$. Indeed, when $m$ is small, the energy $\enm$ is dominated by the perimeter term for which the ball is the global minimizer whereas for $m$ large it is dominated by the attractive nonlocal energy $\nlb$ which is also minimized globally by a ball. Precisely, we establish the following as our main result.

\begin{theorem}[Stability of $B_1$] \label{thm:stability}
    Let $d\geq2$, $\alpha\in(0,d-1)$, $\beta>0$. For every $m>0$ there exists $\e(m)\geq0$ such that the unit ball $B_1$ is stable for the functional $\enm$ if and only if $\e\geq\e(m)$. The function $m\mapsto\e(m)$ is continuous, $\e(0)=0$, $\e(m)>0$ for $m\in(0,m_*)$ and $\e(m)\equiv0$ for $m\geq m_*$, where $m_*$ is defined by
    \begin{equation} \label{eq:mstar}
    m_* = m_*(d,\alpha,\beta) \defeq
    \begin{cases}\displaystyle
    \displaystyle \biggl( \frac{\alpha(d-\alpha)(2d+2+\beta)}{\beta(d+\beta)(2d+2-\alpha)}\cdot\frac{\nla(B_1)}{\nlb(B_1)} \biggr)^{\frac{d}{\alpha+\beta}} & \text{if }\beta\geq \beta_*, \\[3ex]
    \displaystyle \biggl( \frac{\alpha(d-\alpha)(2d-\alpha)(d-1+\beta)}{\beta(d+\beta)(2d+\beta)(d-1-\alpha)}\cdot\frac{\nla(B_1)}{\nlb(B_1)} \biggr)^{\frac{d}{\alpha+\beta}}& \text{if }\beta<\beta_*,
    \end{cases}
    \end{equation}
    and 
    \begin{equation} \label{eq:betastar}
    \beta_* = \beta_*(d,\alpha)\defeq \frac{6d+2+\alpha(d-1)}{d-1-\alpha}\,.
    \end{equation}
\end{theorem}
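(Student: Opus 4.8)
\emph{Setup and diagonalization.} Since each term of $\enm$ is invariant under rigid motions and $|B_1|=\omega_d$, the ball $B_1$ is a volume-constrained critical point of $\enm$, and by definition it is stable precisely when the second variation $\partial^2\enm(B_1)$ is nonnegative on volume-preserving normal perturbations $\phi$ of $\partial B_1$, i.e. on $\phi\in H^1(\Sone)$ with $\int_{\Sone}\phi\,\dd\Hd=0$. Writing $\phi=\sum_{k\ge1}\phi_k$ in spherical harmonics (the degree-$0$ component is killed by the volume constraint), the quadratic form diagonalizes,
\[
\partial^2\enm(B_1)[\phi]=\sum_{k\ge1}\mu_k(m,\veps)\,\norm{\phi_k}_{L^2(\Sone)}^2,\qquad \mu_k(m,\veps)=\veps\,\gamma_k+m^{p_\alpha}a_k^{-\alpha}+m^{p_\beta}a_k^{\beta},
\]
where $\gamma_k=k(k+d-2)-(d-1)$ is the perimeter eigenvalue, $a_k^{-\alpha}$ and $a_k^{\beta}$ are the eigenvalues of the second variations of $\nla$ and $\nlb$ at $B_1$ (which likewise diagonalize by rotational symmetry; explicit formulas are available, see \cite{BonCriTop24,FFMMM}), and $p_\alpha=\tfrac{d-\alpha+1}{d}$, $p_\beta=\tfrac{d+\beta+1}{d}$. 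Since $\mu_1\equiv0$ (translation invariance of every term; equivalently $\gamma_1=0$, $a_1^{\pm}=0$), stability of $B_1$ is equivalent to $\mu_k(m,\veps)\ge0$ for all $k\ge2$.

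\emph{Reduction to a scalar threshold.} One has $\gamma_k>0$ for $k\ge2$ (with minimum $\gamma_2=d+1$); by rearrangement inequalities $B_1$ strictly maximizes $\nla$ and strictly minimizes $\nlb$ under the volume constraint, whence $a_k^{-\alpha}<0$ and $a_k^{\beta}>0$ for $k\ge2$ (these signs also follow from the closed forms). Crucially, and here the hypothesis $\alpha<d-1$ enters, the eigenvalues $a_k^{-\alpha}$ and $a_k^{\beta}$ stay bounded and converge as $k\to\infty$ to finite limits $a_\infty^{-\alpha}\le0$ and $a_\infty^{\beta}>0$, while $\gamma_k\to\infty$. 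Consequently, for $k\ge2$ the condition $\mu_k(m,\veps)\ge0$ reads $\veps\ge g_k(m)$ with
\[
g_k(m)\defeq\frac{-m^{p_\alpha}a_k^{-\alpha}-m^{p_\beta}a_k^{\beta}}{\gamma_k}=\frac{m^{p_\alpha}}{\gamma_k}\bigl(|a_k^{-\alpha}|-m^{(\alpha+\beta)/d}a_k^{\beta}\bigr),
\]
using $p_\beta-p_\alpha=\tfrac{\alpha+\beta}{d}$. Therefore $B_1$ is stable iff $\veps\ge\sup_{k\ge2}g_k(m)$, and we set $\e(m)\defeq\max\bigl\{0,\sup_{k\ge2}g_k(m)\bigr\}$. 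Since $|a_k^{-\alpha}|$ and $a_k^{\beta}$ are bounded uniformly in $k$, $g_k(m)\to0$ as $k\to\infty$ locally uniformly in $m$; hence the supremum is attained (at a finite $k$, or is $\le0$), $m\mapsto\e(m)$ is continuous, and $\e(0)=0$ since $g_k(0)=0$.

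\emph{Zero set of $\e$ and identification of $m_*$.} For $m>0$ one has $g_k(m)\le0$ for all $k\ge2$ iff $m^{(\alpha+\beta)/d}\ge|a_k^{-\alpha}|/a_k^{\beta}$ for all $k\ge2$, i.e. iff $m\ge m_*$ where $m_*\defeq\bigl(\sup_{k\ge2}|a_k^{-\alpha}|/a_k^{\beta}\bigr)^{d/(\alpha+\beta)}$. Thus $\e(m)\equiv0$ for $m\ge m_*$, while for $0<m<m_*$ some $g_k(m)>0$ and so $\e(m)>0$; this yields all the qualitative assertions. It remains to evaluate $m_*$: inserting the closed forms of $a_k^{-\alpha}$ and $a_k^{\beta}$, which factor as $\nla(B_1)$, respectively $\nlb(B_1)$, times a dimensionless function of $(k,d,\alpha)$, respectively $(k,d,\beta)$, the ratio $|a_k^{-\alpha}|/a_k^{\beta}$ equals $\tfrac{\nla(B_1)}{\nlb(B_1)}$ times an explicit function $R(k)$. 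A monotonicity analysis of $R$ over integers $k\ge2$, its behavior being governed by the sign of one explicit quantity, shows that $\sup_{k\ge2}R(k)$ is attained either at $k=2$ or in the limit $k\to\infty$; computing these two candidate values gives exactly the two expressions in \eqref{eq:mstar}, and the comparison between them reduces to an inequality that holds iff $\beta\ge\beta_*$, with the cross-over value $\beta_*$ of \eqref{eq:betastar}.

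\emph{Main obstacle.} Everything through the identification of $m_*$ is soft once the diagonalized second variation is in hand. The genuine difficulty is the last step: pinning down the correct closed forms of $a_k^{\pm}$, carrying out the monotonicity analysis of the $k$-dependent ratio so as to rule out interior maximizers, and the algebra that splits $m_*$ into the two regimes and produces $\beta_*$. A secondary technical point is the uniform-in-$k$ control of $a_k^{-\alpha}$ used above, which is precisely what the assumption $\alpha<d-1$ secures.
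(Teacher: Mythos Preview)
Your overall strategy---diagonalizing the second variation in spherical harmonics, reducing stability to $\veps\ge\sup_{k\ge2}g_k(m)$, extracting continuity from the uniform decay $g_k\to0$, and identifying $m_*$ as the supremum of the positive roots of the $g_k$---is exactly the paper's approach (Propositions~\ref{prop:stab_ball} and~\ref{prop:eps-reg}), and your use of $\alpha<d-1$ to secure boundedness of the repulsive eigenvalues is the right observation.

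There is, however, a concrete error in the final step. You assert that $\sup_{k\ge2}|a_k^{-\alpha}|/a_k^\beta$ is attained ``either at $k=2$ or in the limit $k\to\infty$''. In fact $k=2$ realizes the \emph{infimum} of this ratio, not the supremum: see \cite[Lemma~A.1]{BonCriTop24}, recorded here as \eqref{eq:inf_mk}. The correct dichotomy, established in \cite[Theorem~2.7]{BonCriTop24} and summarized in Remark~\ref{rmk:mstar2}, is that the supremum is attained at $k=3$ when $\beta\ge\beta_*$ and only in the limit $k\to\infty$ when $\beta<\beta_*$; the factors $(2d+2+\beta)$ and $(2d+2-\alpha)$ in the first line of \eqref{eq:mstar} come from the $k=3$ computation, not from $k=2$. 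Had you actually evaluated the $k=2$ candidate you would not recover that expression. The underlying monotonicity analysis is also more delicate than a two-candidate comparison: the sequence of roots $(m_k^0)_{k\ge2}$ is not globally monotone, and ruling out interior maximizers beyond $k=3$ requires the case analysis carried out in \cite{BonCriTop24}.
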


\begin{figure}
	\begin{center}
		\includegraphics[width=6.5cm]{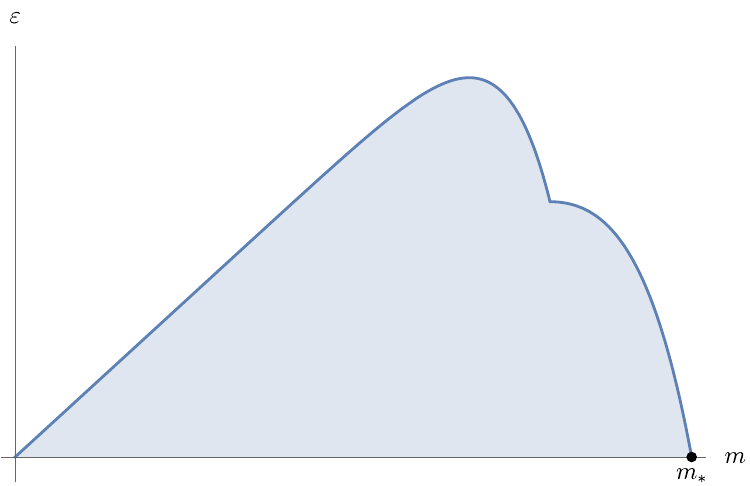}
		\hspace{1cm}
		\includegraphics[width=6.5cm]{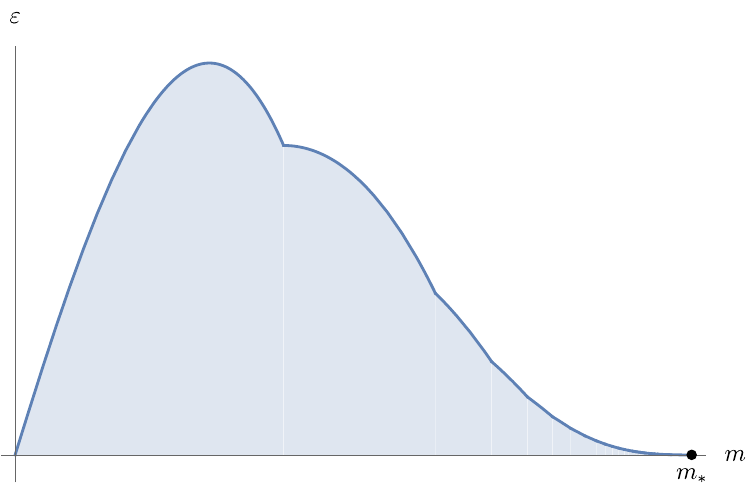}
	\end{center}
	\caption{Numerical plots of the curve $\e(m)$ dividing the stability/instability regions for the unit ball $B_1$ as in Theorem~\ref{thm:stability}; the ball is unstable in the shaded region. Left: the case $\beta>\beta_*$ ($d=3$, $\alpha=1$, $\beta=30$). Right: the case $\beta<\beta_*$ ($d=3$, $\alpha=1$, $\beta=4$). Here $\beta_*=22$.}
	\label{fig:intro}
\end{figure}

See Figure~\ref{fig:intro} for numerical plots of the curve $\e(m)$ in the two cases $\beta>\beta_*$ and $\beta<\beta_*$.
Stability in the previous theorem should be interpreted as the nonnegativity of the second variation of the functional $\enm$ with respect to volume-preserving perturbations of $B_1$ (see Section~\ref{sec:stability}). The proof of Theorem~\ref{thm:stability} follows by combining Proposition~\ref{prop:stab_ball} and Proposition~\ref{prop:eps-reg}, and is based on the analysis of the sign of the quadratic form associated to the second variation, exploiting a spherical harmonics expansion in the spirit of \cite{FFMMM}.

In the case $\beta>\beta_*$ a more precise and explicit description of the function $\e(m)$ is available, see Proposition~\ref{prop:eps-betalarge}. The case $\beta<\beta_*$ is significantly more involved, since the function $\e(m)$ is defined as the supremum of an infinite family of curves; we discuss conjectures about its behavior that are supported by numerical evidence in Section~\ref{sec:numerics}.

By Theorem~\ref{thm:stability} we have in particular that the ball $B_1$ is always stable for all $m\geq m_*$, for all $\e>0$; the constant $m_*$ has indeed been characterized in \cite{BonCriTop24} as the stability threshold of $B_1$ for the functional $\mathcal{F}_{0,m}$, without the perimeter perturbation. On the other hand, denoting by $\e_*\defeq\max_{m\geq0}\e(m)$, we have that $B_1$ is stable for all $\e\geq\e_*$, for all values of $m$. For a fixed $\e<\e_*$, there are instead (at least) two disconnected regions for the parameter $m$ in which the ball is stable, separated by an instability region.

As is common in minimization problems for functionals given by the sum of a perimeter term and a bulk term, the strict stability of $B_1$ is expected to imply its local minimality among sets that are close to the ball in the $L^1$-topology -- that is, sets whose symmetric difference with $B_1$ has sufficiently small volume. This has been established for Gamow's model in \cite{BoCr14,FFMMM}; for the functional $\mathcal{F}_{0}$, due to the lack of the regularizing effect from the perimeter term, we have shown in \cite{BonCriTop24} that the strict stability of $B_1$ ensures its local minimality among sets whose boundaries lie within a small uniform neighbourhood of the boundary of $B_1$. Nevertheless, we do not pursue this question further here.

\medskip

Regarding global minimizers, we prove the existence of solutions of \eqref{eq:min} for all values of $\e$ and $V$ (see Proposition~\ref{prop:existence}), via a compactness result due to Frank and Lieb \cite{FrLi2015} combined with the confining effect of the attraction term. Then we simply collect results from the seminal papers by Figallli, Fusco, Maggi, Morini, and Millot \cite{FFMMM}, and by Frank and Lieb \cite{FraLie21} to obtain the following for the rescaled energy $\enm$:

\begin{theorem}[Global minimality of $B_1$] \label{thm:globalmin}
    Let $d\geq2$, $\alpha\in(0,d-1)$, $\beta>0$. Then there exists a function $\epsglob(m)\geq0$, with $\epsglob(m)\to0$ as $m\to0^+$, such that the unit ball $B_1$ is a global minimizer of $\enm$ if and only if $\e\geq\epsglob(m)$. Moreover, there exists a constant $m_{\rm ball}=m_{\rm ball}(d,\alpha,\beta)>0$ such that $\epsglob(m)>0$ for $m\in(0,m_{\rm ball})$ and $\epsglob(m)\equiv 0$ for $m\geq m_{\rm ball}$.
\end{theorem}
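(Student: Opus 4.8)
The plan is to deduce the whole statement from soft monotonicity arguments, feeding in two external optimality results as black boxes: the minimality of the ball for Gamow's liquid drop model at small mass, and the minimality of the ball for the purely nonlocal energy $\mathcal{F}_0$ at large volume. First I set up \emph{monotonicity in $\e$}. Fix $m>0$ and write $\enm(E)=\e\,\per(E)+c_1\nla(E)+c_2\nlb(E)$, with $c_1=m^{(d-\alpha+1)/d}$ and $c_2=m^{(d+\beta+1)/d}$. If $B_1$ is a global minimizer of $\mathcal{F}_{\e_0,m}$ and $\e\geq\e_0$, then for every competitor $E$ with $|E|=\omega_d$ the isoperimetric inequality $\per(E)\geq\per(B_1)$ gives
\[
\enm(E)=\mathcal{F}_{\e_0,m}(E)+(\e-\e_0)\per(E)\geq \mathcal{F}_{\e_0,m}(B_1)+(\e-\e_0)\per(B_1)=\enm(B_1),
\]
so $B_1$ is a global minimizer of $\enm$. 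Hence $S_m\defeq\{\e\geq0:\,B_1\text{ is a global minimizer of }\enm\}$ is an upper half-line; it is also closed, since $\enm(E)$ is affine in $\e$ with $\e$-independent nonlocal part, so passing to the limit in $\mathcal{F}_{\e_n,m}(B_1)\leq\mathcal{F}_{\e_n,m}(E)$ along $\e_n\downarrow\e_*$ yields $\e_*\in S_m$. Once $S_m\neq\emptyset$ is known, I set $\epsglob(m)\defeq\min S_m$, obtaining $S_m=[\epsglob(m),\infty)$, which is the claimed equivalence.

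Next I establish \emph{nonemptiness of $S_m$ and the limit $\epsglob(m)\to0$}. Recall that $B_1$ minimizes the attractive term among sets of volume $\omega_d$: for $\beta>0$ one has $\nlb(E)\geq\nlb(B_1)$, a standard consequence of the Riesz rearrangement inequality (write $\nlb(E)=\int_0^\infty\beta t^{\beta-1}\bigl(\omega_d^2-\int_E|E\cap B_t(x)|\dd x\bigr)\dd t$ and rearrange, for each $t$, the symmetric decreasing kernel $\mathbf{1}_{\{|z|<t\}}$). Consequently, if $B_1$ minimizes the Gamow-type functional $\per+\lambda\nla$ among sets of volume $\omega_d$, then for every $\mu\geq0$,
\[
\per(E)+\lambda\nla(E)+\mu\nlb(E)\geq\bigl[\per(B_1)+\lambda\nla(B_1)\bigr]+\mu\nlb(E)\geq\per(B_1)+\lambda\nla(B_1)+\mu\nlb(B_1),
\]
so $B_1$ minimizes $\per+\lambda\nla+\mu\nlb$ as well. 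By the established small-mass minimality of the ball for Gamow's model (see e.g.\ \cite{BoCr14,FFMMM,KnMu2014,FraNam21}) there is $\lambda_0=\lambda_0(d,\alpha)>0$ such that $B_1$ minimizes $\per+\lambda\nla$ for $\lambda\in[0,\lambda_0]$. Since $\enm=\e\bigl(\per+\tfrac{c_1}{\e}\nla+\tfrac{c_2}{\e}\nlb\bigr)$, the above shows $\e\in S_m$ whenever $c_1/\e\leq\lambda_0$; hence $S_m\neq\emptyset$ and $0\leq\epsglob(m)\leq m^{(d-\alpha+1)/d}/\lambda_0\to0$ as $m\to0^+$.

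Then I treat \emph{the threshold $m_{\rm ball}$}. For $\e=0$, factoring out $c_1$ from $\mathcal{F}_{0,m}=c_1\nla+c_2\nlb$ shows that $B_1$ minimizes $\mathcal{F}_{0,m}$ if and only if it minimizes $\nla+t\,\nlb$ with $t=m^{(\alpha+\beta)/d}$; invoking again $\nlb(E)\geq\nlb(B_1)$, the computation above shows that $M\defeq\{m>0:\,B_1\text{ minimizes }\mathcal{F}_{0,m}\}$ is an upper half-line, closed by the same limiting argument (now using continuity of $m\mapsto\mathcal{F}_{0,m}(E)$). By \cite{FraLie21}, $M$ contains all sufficiently large $m$, so $M=[m_{\rm ball},\infty)$ for some $m_{\rm ball}<\infty$. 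If $m\geq m_{\rm ball}$ then $0\in S_m$, whence $\epsglob(m)=0$; if $0<m<m_{\rm ball}$ then $0\notin S_m$, and since $S_m$ is closed, $\epsglob(m)=\min S_m>0$. Finally $m_{\rm ball}>0$: letting $E_R$ be the union of two disjoint balls of volume $\omega_d/2$ at mutual distance $R$, a scaling computation gives $\nla(E_R)\to2^{(\alpha-d)/d}\nla(B_1)<\nla(B_1)$ as $R\to\infty$ (using $\alpha<d$), while $\nlb(E_R)$ stays finite; hence $\mathcal{F}_{0,m}(E_R)<\mathcal{F}_{0,m}(B_1)$ for $R$ large and then $m$ small, so $B_1$ is not a global minimizer of $\mathcal{F}_{0,m}$ for small $m$, i.e.\ $m_{\rm ball}>0$.

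\emph{On the main obstacle.} The proof is essentially a bookkeeping assembly, so I do not expect a serious obstacle. The only substantive inputs are the two cited optimality theorems — small-mass minimality of the ball for $\per+\lambda\nla$, which supplies $\lambda_0>0$, and large-volume minimality of the ball for $\mathcal{F}_0$, which supplies $m_{\rm ball}<\infty$ — together with the existence of minimizers (Proposition~\ref{prop:existence}), which is what makes the complementary regime $\e<\epsglob(m)$ informative, guaranteeing a genuine non-ball minimizer there. The one step requiring a short independent verification is the minimality of $B_1$ for the attractive interaction $\nlb$; everything else is monotonicity and elementary rescaling.
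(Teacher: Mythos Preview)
Your proposal is correct and follows essentially the same route as the paper: monotonicity in $\e$ via the isoperimetric inequality, nonemptiness of $S_m$ via the Gamow small-mass result, minimality of $B_1$ for $\nlb$ via Riesz rearrangement, and the large-mass threshold via Frank--Lieb, combined with monotonicity of $\mathcal{F}_{0,m}$-minimality in $m$.

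Two small points where you diverge are worth noting. First, for $\epsglob(m)\to0$ you obtain the explicit bound $\epsglob(m)\le m^{(d-\alpha+1)/d}/\lambda_0$ directly from the Gamow threshold, whereas the paper invokes the stability curve $\e(m)$ from Theorem~\ref{thm:stability} (ultimately the same scaling, but your argument is self-contained and does not rely on Section~\ref{sec:stability}). Second, you supply an explicit two-ball competitor to verify $m_{\rm ball}>0$; the paper imports this from the Frank--Lieb threshold without spelling out the competitor. One phrasing to tighten: when you write ``$\nlb(E_R)$ stays finite,'' you mean for a \emph{fixed} large $R$ (it diverges as $R\to\infty$); your order of quantifiers ``$R$ large and then $m$ small'' makes this work, but the sentence as written could be misread.
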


Denoting $\epsglob^* \defeq \max_{m\geq 0} \epsglob(m)$, we see that $B_1$ is the global minimizer of $\enm$ for all $\e\geq \epsglob^*$ and for all values of $m$. If $\e < \epsglob^*$, then there are instead (at least) two disconnected regions for the parameter $m$ where the ball is a global minimizer.

\medskip

In the rest of the paper we will always assume that $\alpha\in(0,d-1)$ and $\beta>0$ are fixed parameters. The paper is organized as follows. In Section~\ref{sec:stability} we study the second variation of the functional $\enm$ at the unit ball $B_1$ and we prove in particular Theorem~\ref{thm:stability}. We discuss global minimizers in Section~\ref{sec:global_min}, which contains the proof of Theorem~\ref{thm:globalmin}. In Section~\ref{sec:numerics} we provide some additional numerical insights on the shape of the stability curve $\e(m)$.




\section{Stability of the ball}\label{sec:stability}

In this section we discuss the volume-constrained stability of the unit ball $B_1$ for the family of functionals $\enm$ introduced in \eqref{eq:energy-scaled}. Given a set $E$ with finite volume and finite perimeter, we say that $E$ is a \emph{volume-constrained stationary set} for $\enm$ if the first variation of the energy at $E$ along any volume-preserving flow induced by any vector field $X \in C^\infty_c(\R^d;\R^d)$ is zero, i.e., if
\begin{equation*}
    \frac{\dd}{\dd t}\enm(\Phi_t(E))|_{t=0}=0,
\end{equation*}
where $\Phi_t$ is such that $\frac{\partial}{\partial t}\Phi_t(x) = X(\Phi_t(x))$, $\Phi_0(x)=x$, and satisfies $|\Phi_t(E)|=|E|$ for all $t$. The set $E$ is a \emph{volume-constrained stable set} for $\enm$ if it is stationary and in addition the second variation at $E$ along a volume-preserving flow induced by any vector field $X$ is nonnegative, i.e., if
\begin{equation*}
    \frac{\dd^2}{\dd t^2}\enm(\Phi_t(E))|_{t=0}\geq0
\end{equation*}
where $\Phi_t$ is as before.


\bigskip

\subsection{Stability of the ball}
The ball $B_1$ is a volume-constrained stationary set for $\enm$ since $\pt B_1$ has constant mean curvature and the potential $\int_{B_1} |x-y|^\sigma\dd y$ is constant for any $x\in\pt B_1$.
In order to study the stability of $B_1$ we introduce the following quadratic form:
\begin{align} \label{eq:quad_form}
&\mathcal{Q} \enm (u) \defeq
\e \left[ \int_{\pt B_1} |\nabla_{\tau} u|^2 \dd\Hd - (d-1) \int_{\pt B_1} u^2 \dd \Hd \right] \nonumber \\
& + m^{(d-\alpha+1)/d}\left[- \int_{\partial B_1}\int_{\partial B_1} \frac{|u(x)-u(y)|^2}{|x-y|^\alpha}\dd\Hd_x\dd\Hd_y + c^2_{-\alpha,\partial B_1}\int_{\partial B_1} u^2\dd\Hd \right] \\
& + m^{(d+\beta+1)/d}\left[-\int_{\partial B_1}\int_{\partial B_1} |x-y|^\beta |u(x)-u(y)|^2\dd\Hd_x\dd\Hd_y +  c^2_{\beta,\partial B_1} \int_{\partial B_1} u^2\dd\Hd\right], \nonumber
\end{align} 
where
\begin{equation} \label{eq:nlcurvature_ball}
c^2_{\sigma,\partial B_1}(x) \defeq \int_{\partial B_1}|x-y|^\sigma |\nu_{B_1}(x)-\nu_{B_1}(y)|^2\dd\Hd_y \qquad\text{for all } x\in\partial B_1,
\end{equation}
the symbol $\nabla_\tau$ denotes the tangential gradient on $\partial B_1$, and $\nu_{B_1}$ the unit outward normal to $\pt B_1$. Notice that $c^2_{\sigma,\partial B_1}(x)$ is constant on $\partial B_1$, and its value can be computed explicitly, see \cite[Remark~2.3]{BonCriTop24}. By \cite[Remark~6.2]{FFMMM} and \cite[Theorem~2.1]{BonCriTop24}, the second variation of $\enm$ at $B_1$ along a volume-preserving flow induced by the vector field $X$ is given by $\mathcal{Q}\enm( X \cdot \nu_{B_1})$. 

The quadratic form $\mathcal{Q}\enm(u)$ can be expressed in terms of the Fourier decomposition of $u$ with respect to the orthonormal basis of spherical harmonics. If $\mathcal{S}_k$ is the finite dimensional subspace of spherical harmonics of degree $k\in\N\cup\{0\}$, and $\{Y_k^i\}_{i=1}^{d(k)}$ is an orthonormal basis (of dimension $d(k)$) for $\mathcal{S}_k$ in $L^2(\partial B_1)$ (see for instance \cite{Gro}), then for $u\in L^2(\partial B_1)$ we let
\begin{equation} \label{eq:fourier}
a^i_k(u)\defeq \int_{\partial B_1} u \, Y^i_k \dd\Hd
\end{equation}
be the Fourier coefficient of $u$ corresponding to $Y^i_k$ so that we have
\begin{equation} \label{eq:L2norm}
\|u\|_{L^2(\partial B_1)}^2 = \sum_{k=0}^\infty\sum_{i=1}^{d(k)} \bigl(a^i_k(u)\bigr)^2.
\end{equation}
Combining the results in \cite{FFMMM} and \cite{Asc22} (see also \cite[Section 2.3]{BonCriTop24}), one easily obtains the following:
\begin{proposition} \label{prop:quad_form}
For $u \in L^2(\pt B_1)$ we have
    \begin{multline} \label{eq:IIvar_spherical_harm}
\mathcal{Q}\enm(u) = \sum_{k=0}^\infty \sum_{i=1}^{d(k)} \Bigl[ \e \bigl(\lambda_k-\lambda_1\bigr) 
+ m^{(d-\alpha+1)/d}\bigl(\mu_1(-\alpha)-\mu_k(-\alpha)\bigr) \\ + m^{(d+\beta+1)/d}\bigl(\mu_1(\beta)-\mu_k(\beta) \bigr)\Bigr]\bigl(a^i_k(u)\bigr)^2.
\end{multline}
Here $\lambda_k=k(k+d-2)$ for $k \geq 1$, $\lambda_0=0$, and for $\sigma\in(-(d-1),\infty)$,
\begin{equation} \label{eq:mu}
\mu_{k}(\sigma) \defeq (d-1)\omega_{d-1}2^{d-1+\sigma} \, \frac{\Gamma\left(\frac{d-1+\sigma}{2}\right)\Gamma\left(\frac{d-1}{2}\right)}{\Gamma\left(\frac{2d-2+\sigma}{2}\right)}\Biggl[1-\prod_{j=0}^{k-1}\frac{j-\frac{\sigma}{2}}{j+d-1+\frac{\sigma}{2}}\Biggr]
\end{equation}
for $k\geq1$, and $\mu_0(\sigma)=0$.
\end{proposition}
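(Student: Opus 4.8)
The plan is to diagonalize each of the three brackets in \eqref{eq:quad_form} separately in the orthonormal basis $\{Y_k^i\}$ of spherical harmonics, which reduces the statement to three elementary computations plus one classical closed-form integral. For the perimeter bracket I would use that $\partial B_1=\mathbb{S}^{d-1}$ and that each $Y_k^i$ is an eigenfunction of the Laplace--Beltrami operator, $-\Delta_\tau Y_k^i=\lambda_k Y_k^i$ with $\lambda_k=k(k+d-2)$; then integration by parts together with \eqref{eq:L2norm} gives $\int_{\partial B_1}|\nabla_\tau u|^2\dd\Hd=\sum_{k,i}\lambda_k\,(a_k^i(u))^2$, and since $\lambda_1=d-1$ the perimeter bracket equals $\sum_{k,i}\e(\lambda_k-\lambda_1)(a_k^i(u))^2$.

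For a nonlocal bracket with exponent $\sigma\in\{-\alpha,\beta\}$, I would expand $|u(x)-u(y)|^2=u(x)^2+u(y)^2-2u(x)u(y)$ to write
\[
\int_{\partial B_1}\!\int_{\partial B_1}|x-y|^\sigma|u(x)-u(y)|^2\dd\Hd_x\dd\Hd_y = 2\int_{\partial B_1}G_\sigma(x)\,u(x)^2\dd\Hd_x - 2\int_{\partial B_1}\!\int_{\partial B_1}|x-y|^\sigma u(x)u(y)\dd\Hd_x\dd\Hd_y ,
\]
where $G_\sigma(x)\defeq\int_{\partial B_1}|x-y|^\sigma\dd\Hd_y$ is constant by rotational symmetry. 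Since the kernel $(x,y)\mapsto|x-y|^\sigma=(2-2x\cdot y)^{\sigma/2}$ is zonal, the Funk--Hecke theorem shows that the operator $u\mapsto\int_{\partial B_1}|x-y|^\sigma u(y)\dd\Hd_y$ acts on $\mathcal{S}_k$ as multiplication by a scalar $\gamma_k(\sigma)$, with $\gamma_0(\sigma)=G_\sigma$; hence, by \eqref{eq:fourier}--\eqref{eq:L2norm}, the quantity above equals $\sum_{k,i}2(\gamma_0(\sigma)-\gamma_k(\sigma))(a_k^i(u))^2$. For the remaining term I would observe that on the sphere $\nu_{B_1}(x)=x$ and $|x-y|^2=2-2x\cdot y$, so $c^2_{\sigma,\partial B_1}=G_{\sigma+2}$; expanding $|x-y|^{\sigma+2}=2|x-y|^\sigma-2(x\cdot y)|x-y|^\sigma$ and applying Funk--Hecke to the degree-one harmonic $y\mapsto x\cdot y$ yields $G_{\sigma+2}=2\gamma_0(\sigma)-2\gamma_1(\sigma)$. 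Setting $\mu_k(\sigma)\defeq 2(\gamma_0(\sigma)-\gamma_k(\sigma))$ (so that $\mu_0(\sigma)=0$), the $\sigma$-bracket in \eqref{eq:quad_form} collapses to $\sum_{k,i}(\mu_1(\sigma)-\mu_k(\sigma))(a_k^i(u))^2$; taking $\sigma=-\alpha$ and $\sigma=\beta$ and adding the perimeter contribution gives \eqref{eq:IIvar_spherical_harm}. (One sees immediately that the $k=1$ coefficient vanishes, as it must by translation invariance.)

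It remains to identify $\mu_k(\sigma)=2(\gamma_0(\sigma)-\gamma_k(\sigma))$ with the explicit expression \eqref{eq:mu}, which is the one genuinely computational point and the main obstacle. Up to an explicit constant depending only on $d$, the Funk--Hecke eigenvalue is $\gamma_k(\sigma)=\int_{-1}^1(2-2t)^{\sigma/2}\frac{C_k^{(d-2)/2}(t)}{C_k^{(d-2)/2}(1)}(1-t^2)^{(d-3)/2}\dd t$, a beta-type integral of a Gegenbauer polynomial against a power of $1-t$; its $k=0$ case is a Beta integral producing the Gamma-function prefactor in \eqref{eq:mu}, and the ratio $\gamma_k(\sigma)/\gamma_0(\sigma)$ telescopes to $\prod_{j=0}^{k-1}\frac{j-\sigma/2}{j+d-1+\sigma/2}$. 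This evaluation is carried out for the repulsive (Riesz) kernel, i.e.\ $\sigma=-\alpha$, in \cite{FFMMM}, and for the attractive kernel $|x-y|^\beta$ in \cite{Asc22} (see also \cite[Section~2.3]{BonCriTop24}), so that \eqref{eq:mu} follows by combining these. Finally, the assumption $\alpha\in(0,d-1)$ enters through $-\alpha\in(-(d-1),\infty)$: it guarantees that $|x-y|^{-\alpha}$ is integrable on $\partial B_1\times\partial B_1$, so that $G_{-\alpha}$ and the associated seminorm are finite for every $u\in L^2(\partial B_1)$ and the eigenvalues $\mu_k(-\alpha)$ are well defined; the identity is then understood with both sides equal to $+\infty$ when $u\notin H^1(\partial B_1)$.
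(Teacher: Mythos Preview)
Your proposal is correct and follows exactly the line of argument that the paper invokes: the paper itself does not give a proof but simply states that the expansion is obtained ``combining the results in \cite{FFMMM} and \cite{Asc22} (see also \cite[Section~2.3]{BonCriTop24})'', and your sketch reproduces precisely the standard Funk--Hecke diagonalization carried out in those references, together with the identification $c^2_{\sigma,\partial B_1}=\mu_1(\sigma)$ (which the paper records separately in Remark~\ref{rmk:prop_eigenvalues}). In short, you have supplied the details that the paper leaves to the citations; nothing is missing and no alternative route is taken.
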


\medskip

\begin{remark}[Properties of $(\lambda_k)_k$ and $(\mu_k(\sigma))_k$]\label{rmk:prop_eigenvalues}
Clearly, by definition, the sequence $(\lambda_k)_k$ is strictly increasing. By \cite[Lemma~2.6]{BonCriTop24}, for any $-(d-1)<\sigma<0$ the sequence $(\mu_k(\sigma))_k$ is strictly increasing and $\mu_k(\sigma) - \mu_1(\sigma) \geq \mu_2(\sigma)-\mu_1(\sigma) = -\frac{\sigma \mu_1(\sigma)}{2d+\sigma}>0$ for all $k\geq2$.
If $\sigma>0$, on the other hand, $\max_{k\geq1}\mu_{k}(\sigma)=\mu_1(\sigma)$ and $\mu_1(\sigma) - \mu_k(\sigma) \geq C_{d,\sigma}>0$ for all $k\geq2$, for some constant $C_{d,\sigma}$ depending only on $d$ and $\sigma$. Furthermore, the sequence $(\mu_k(\sigma))_k$ is bounded, with $\lim_{k\to\infty}\mu_k(\sigma)$ given by an explicit constant. Finally, one has $\mu_1(\sigma)= c^2_{\sigma,\partial B_1}$ where $c^2_{\sigma,\partial B_1}$ is the constant in \eqref{eq:nlcurvature_ball}.
\end{remark}

\medskip

For any $k \geq 2$, we define the functions $\e_k \colon [0,\infty) \to \R$ by
\begin{equation}\label{eq:eps_k}
        \e_k(m) \defeq \frac{m^{(d-\alpha+1)/d}\Bigl[ \bigl(\mu_k(-\alpha)-\mu_1(-\alpha)\bigr) - m^{(\alpha+\beta)/d}\bigl(\mu_1(\beta)-\mu_k(\beta)\bigr) \Bigr]}{\lambda_k-\lambda_1}\,,
\end{equation}
and, for notational convenience, we also set $\e_1(m)\equiv0$. Let for $m\geq0$
\begin{equation} \label{eq:eps_func}
        \e(m) \defeq \sup_{k \geq 1} \e_k(m).
\end{equation}

\begin{proposition}\label{prop:stab_ball}
    Let $m>0$, $\alpha\in(0,d-1)$ and $\beta>0$. The ball of unit volume $B_1$ is a volume-constrained stable set for $\enm$ if and only if $\e \geq \e(m)$ with $\e(m)$ defined in \eqref{eq:eps_func}.
\end{proposition}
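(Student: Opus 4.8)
The idea is to read off the sign of the second variation directly from its diagonalization in spherical harmonics provided by Proposition~\ref{prop:quad_form}. As already observed, $B_1$ is a volume-constrained stationary set for $\enm$, so by definition it is stable if and only if $\mathcal{Q}\enm(u)\ge0$ for every normal velocity $u$ that is admissible for the volume constraint. Differentiating $|\Phi_t(E)|=|E|$ at $t=0$ (with $E=B_1$) shows that such velocities are exactly those with $\int_{\partial B_1}u\dd\Hd=0$, i.e.\ with $a^1_0(u)=0$; every such smooth $u$ is realized as $X\cdot\nu_{B_1}$ for a suitable volume-preserving flow, and by continuity of $\mathcal{Q}\enm$ one may work with the whole class $\{a^1_0(u)=0\}$, as in the framework underlying the second-variation formula quoted before Proposition~\ref{prop:quad_form}.

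For such $u$, formula \eqref{eq:IIvar_spherical_harm} becomes
\begin{align*}
\mathcal{Q}\enm(u)&=\sum_{k\ge1}\sum_{i=1}^{d(k)}c_k(\e,m)\,\bigl(a^i_k(u)\bigr)^2,\qquad\text{where}\\
c_k(\e,m)&\defeq \e\bigl(\lambda_k-\lambda_1\bigr)+m^{(d-\alpha+1)/d}\bigl(\mu_1(-\alpha)-\mu_k(-\alpha)\bigr)+m^{(d+\beta+1)/d}\bigl(\mu_1(\beta)-\mu_k(\beta)\bigr).
\end{align*}
Since $\lambda_1-\lambda_1=0$ and $\mu_1(\sigma)-\mu_1(\sigma)=0$ one has $c_1(\e,m)=0$, consistent with the convention $\e_1\equiv0$: the degree-$1$ modes (infinitesimal translations) never affect the sign. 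The first genuine step is then an elementary rearrangement: for $k\ge2$ we have $\lambda_k-\lambda_1>0$, and factoring $m^{(d+\beta+1)/d}=m^{(d-\alpha+1)/d}\,m^{(\alpha+\beta)/d}$ one finds that $c_k(\e,m)\ge0$ if and only if $\e\ge\e_k(m)$, with $\e_k(m)$ as in \eqref{eq:eps_k}.

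Next, because the quadratic form is diagonal in the orthonormal basis $\{Y^i_k\}$, the statement ``$\mathcal{Q}\enm(u)\ge0$ for every admissible $u$'' is equivalent to ``$c_k(\e,m)\ge0$ for every $k\ge2$'': the forward implication follows by testing on a single spherical harmonic $u=Y^1_k$ (which has $a^1_0(u)=0$ since $k\ge1$), and the converse by summing nonnegative terms over the expansion. Combined with the previous step, this is precisely $\e\ge\e_k(m)$ for all $k\ge1$, i.e.\ $\e\ge\e(m)=\sup_{k\ge1}\e_k(m)$, which is the assertion.

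The remaining points are bookkeeping rather than real obstacles. One should record that $\e(m)$ is a well-defined nonnegative real number: $\e(m)\ge\e_1(m)=0$, while the bracket in the numerator of \eqref{eq:eps_k} is bounded uniformly in $k$ (the sequences $(\mu_k(-\alpha))_k$ and $(\mu_k(\beta))_k$ are bounded, as follows from \eqref{eq:mu} and Remark~\ref{rmk:prop_eigenvalues}) whereas $\lambda_k-\lambda_1\sim k^2\to\infty$, so $\e_k(m)\to0$ and the supremum in \eqref{eq:eps_func} is in fact attained and finite. One should also check that $\mathcal{Q}\enm$ is finite on the relevant class of perturbations — on $H^1(\partial B_1)$ when $\e>0$, where $c_k(\e,m)\sim\e\lambda_k$, and on $L^2(\partial B_1)$ when $\e=0$, where the coefficients are bounded — so that rearranging the series and reducing to smooth $u$ are legitimate. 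Beyond these routine verifications there is essentially no difficulty: the diagonal structure of \eqref{eq:IIvar_spherical_harm} does all the work, and stability is completely governed by the scalar inequalities $c_k(\e,m)\ge0$.
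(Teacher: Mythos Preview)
Your proof is correct and follows essentially the same route as the paper: reduce stability to the nonnegativity of $\mathcal{Q}\enm$ on zero-mean functions, use the diagonalization \eqref{eq:IIvar_spherical_harm} to discard the $k=0,1$ modes, and observe that $c_k(\e,m)\ge0$ for all $k\ge2$ is equivalent to $\e\ge\e(m)$. The additional bookkeeping you include about the finiteness of $\e(m)$ is handled separately in the paper (in Proposition~\ref{prop:eps-reg}), but it does no harm here.
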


\begin{proof}
    Arguing as in \cite[Theorem~2.7]{BonCriTop24} and \cite[Theorem~7.1]{FFMMM}, we have that $B_1$ is a volume-constrained stable set for $\enm$ if and only if $\mathcal{Q}\enm(u) \geq 0$ for every $u\in C^\infty(\pt B_1)$ with $\int_{\pt B_1} u \dd \Hd = 0$.

    Note that since $\int_{\pt B_1} u \dd \Hd =0$, we have $a_0^1(u)=0$. Hence, the sum in \eqref{eq:IIvar_spherical_harm} starts at $k=2$ (since each difference also vanishes when $k=1$) for every $u\in C^\infty(\pt B_1)$ with $\int_{\pt B_1} u \dd \Hd=0$. Then simply $\mathcal{Q}\enm(u) \geq 0$ for any $u\in C^\infty(\pt B_1)$ with $\int_{\pt B_1} u \dd \Hd=0$ if and only if $\e \geq \e(m)$.   
\end{proof}

\begin{remark} \label{rmk:mstar}
In \cite[Theorem~1.1]{BonCriTop24} we show that the ball $B[V]$ of volume $V$ is a volume-constrained stable set for the functional $\nla+\nlb$ if and only if $V/\omega_d \geq m_*$, where $m_*$ is the explicit constant defined in \eqref{eq:mstar} depending only on $d$, $\alpha$, and $\beta$
\end{remark}


\bigskip

\subsection{Properties of $\e_k(m)$}
Next we would like to investigate the properties of the functions $\e_k(m)$ defined in \eqref{eq:eps_k}, and in turn of their supremum $\e(m)$. To this end we introduce the following notation. We rewrite the function $\e_k(m)$, for $k\geq2$, as
    \[
        \e_k(m) = m^r \bigl(A_k - B_k m^q\bigr),
    \]
where 
    \begin{gather}
        r = \frac{d-\alpha+1}{d}, \qquad q=\frac{\alpha+\beta}{d}, \label{eq:r_and_q} \\
        A_k = \frac{\mu_k(-\alpha)-\mu_1(-\alpha)}{\lambda_k-\lambda_1}, \qquad B_k=\frac{\mu_1(\beta)-\mu_k(\beta)}{\lambda_k-\lambda_1}. \label{eq:A_k_and_B_k}
    \end{gather}
Notice that, in view of Remark~\ref{rmk:prop_eigenvalues}, the quantities $A_k$ and $B_k$ are strictly positive for all $k\geq2$. As a consequence of the simple form of $\e_k(m)$, we obtain some basic properties of $\e_k$ which we list in the next lemma, see also Figure~\ref{fig:plots_1} (left).

\begin{lemma} \label{lem:basic_prop_eps_k}
    For any $k\geq 2$, $\alpha\in(0,d-1)$ and $\beta>0$, we have the following:
        \begin{enumerate}
            \item $\e_k(m)$ has two roots: $m=0$ and $m_k^0 \defeq \bigl(\frac{A_k}{B_k}\bigr)^{1/q}$.
            \item $\e_k(m)$ has one nonzero critical point: $m_k^c \defeq \left( \frac{rA_k}{(r+q)B_k}\right)^{1/q}$.
            \item $\e_k(m)$ is increasing for $m<m_k^c$, decreasing for $m>m_k^c$, and it achieves its maximum value at $m_k^c$:
                \[
                    \max_{m\in [0,\infty)} \e_k(m) = \e_k(m_k^c)
                    = \left(\frac{r A_k}{(r+q)B_k}\right)^{r/q}\left(\frac{q A_k}{r+q}\right).
                \]
            \item $\e_k(m)$ is strictly concave on $(0,\infty)$ if $\alpha>1$, whereas, if $\alpha<1$, $\e_k(m)$ is convex for $m<(\frac{1-\alpha}{1+\beta})^{1/q}m_k^c$ and concave otherwise.
        \end{enumerate}
\end{lemma}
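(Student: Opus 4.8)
The plan is to work throughout with the explicit two‑term representation $\e_k(m) = A_k m^r - B_k m^{r+q}$, where by Remark~\ref{rmk:prop_eigenvalues} we have $A_k, B_k > 0$ for $k\geq 2$, while $r = \frac{d-\alpha+1}{d} > 0$ (since $\alpha < d-1$) and $r+q = \frac{d+1+\beta}{d} > 1$ (since $\beta > 0$); all four assertions then follow from elementary one‑variable calculus applied to this power function. First I would prove (i) by factoring $\e_k(m) = m^r\bigl(A_k - B_k m^q\bigr)$: as $r,q>0$, the only zeros on $[0,\infty)$ are $m=0$ and the solution of $B_k m^q = A_k$, that is, $m_k^0 = (A_k/B_k)^{1/q}$.

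For (ii) and (iii) I would compute $\e_k'(m) = m^{r-1}\bigl(rA_k - (r+q)B_k m^q\bigr)$ and note that on $(0,\infty)$ the factor $m^{r-1}$ is strictly positive, so $\e_k'$ vanishes exactly when $m^q = \frac{rA_k}{(r+q)B_k}$, i.e. at $m = m_k^c$, and is positive for $m < m_k^c$ and negative for $m > m_k^c$; this gives the monotonicity statement and identifies $m_k^c$ as the unique maximizer. The maximal value is then obtained by inserting $(m_k^c)^q = \frac{rA_k}{(r+q)B_k}$ into $\e_k(m_k^c) = (m_k^c)^r\bigl(A_k - B_k(m_k^c)^q\bigr)$ and simplifying $A_k - B_k(m_k^c)^q = \frac{q}{r+q}A_k$, which reproduces the stated formula.

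For (iv) I would differentiate once more, $\e_k''(m) = m^{r-2}\bigl(r(r-1)A_k - (r+q)(r+q-1)B_k m^q\bigr)$; since $m^{r-2} > 0$ and $(r+q)(r+q-1) > 0$ on $(0,\infty)$, the sign of $\e_k''$ is that of the bracketed expression, which is affine and strictly decreasing in $m^q$. The key point is that $r-1 = \frac{1-\alpha}{d}$, so $r(r-1)A_k < 0$ when $\alpha > 1$, giving $\e_k''<0$ throughout and hence strict concavity, while for $\alpha < 1$ one has $r(r-1)A_k > 0$ and therefore $\e_k'' > 0$ precisely when $m^q < \frac{r(r-1)A_k}{(r+q)(r+q-1)B_k}$. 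To recast this threshold in the stated form I would use the identity $\frac{r-1}{r+q-1} = \frac{1-\alpha}{1+\beta}$, immediate from $r-1 = \frac{1-\alpha}{d}$ and $r+q-1 = \frac{1+\beta}{d}$, which turns the bound into $m^q < \frac{1-\alpha}{1+\beta}(m_k^c)^q$, i.e. $m < \bigl(\frac{1-\alpha}{1+\beta}\bigr)^{1/q} m_k^c$, as claimed.

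Since every step is a direct computation, I do not anticipate a real obstacle; the only points requiring slight care are keeping track of the signs of $r-1$ and $r+q-1$ in terms of $\alpha$ and $\beta$ (using $\alpha\in(0,d-1)$, $\beta>0$) and spotting the elementary identity that converts the concavity threshold into the form involving $m_k^c$.
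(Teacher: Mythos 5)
Your proof is correct and follows exactly the elementary single-variable calculus argument the paper has in mind: the paper states the lemma without proof, presenting it as an immediate consequence of the form $\e_k(m)=m^r(A_k-B_km^q)$ with $A_k,B_k>0$, which is precisely what you carry out. All sign computations (in particular $r-1=\frac{1-\alpha}{d}$, $r+q-1=\frac{1+\beta}{d}$, and the identity $\frac{r-1}{r+q-1}=\frac{1-\alpha}{1+\beta}$ converting the inflection threshold into the stated multiple of $m_k^c$) check out.
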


\begin{figure}
	\begin{center}
		\includegraphics[width=6.5cm]{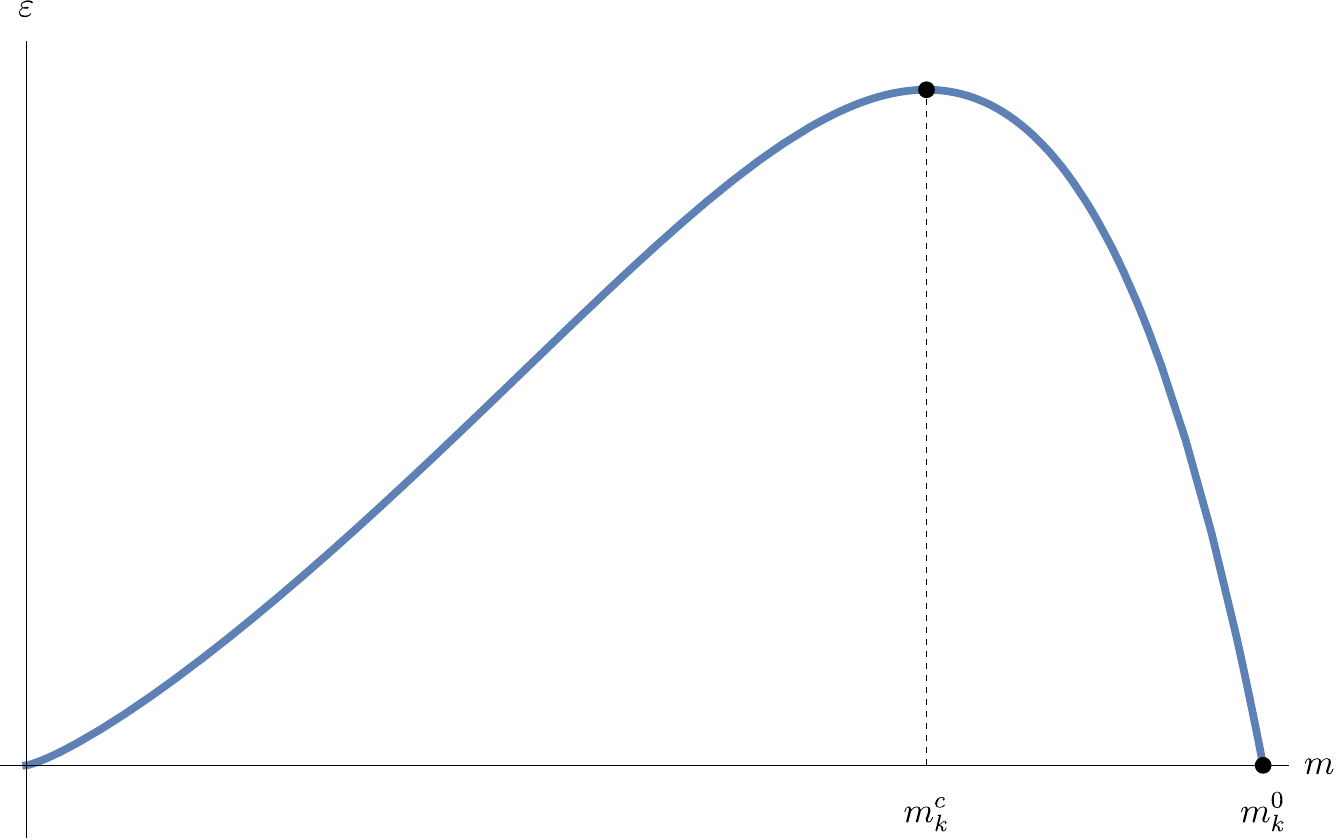}
		\hspace{1cm}
		\includegraphics[width=6.5cm]{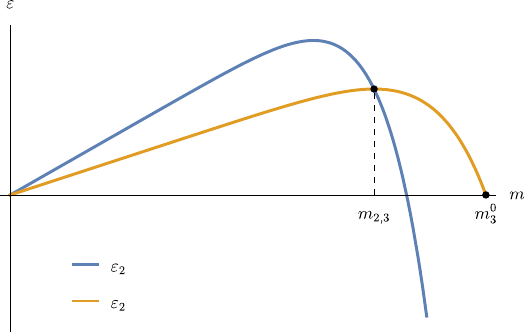}
	\end{center}
	\caption{Left: numerical plot of the shape of one of the functions $\e_k$ as described in Lemma~\ref{lem:basic_prop_eps_k} (the function plotted is $\e_5(m)$ for the values of the parameters $d=3$, $\alpha=0.2$, $\beta=15$). Right: the intersection of $\e_2$ and $\e_3$ is at the maximum point of $\e_3$, see Lemma~\ref{lemma:eps_2eps_3} (here $d=3$, $\alpha=1$, $\beta=24$).}
	\label{fig:plots_1}
\end{figure}

\begin{remark} \label{rmk:mstar2}
    By the results in \cite{BonCriTop24}, see in particular the equations (2.19) and (2.21) in \cite[Theorem~2.7]{BonCriTop24}, the constant $m_*$ defined in \eqref{eq:mstar} coincides with the supremum of the positive roots of the function $\e_k$:
    \begin{equation} \label{eq:mstar2}
        m_*=\sup_{k \geq 2} m_k^0.
    \end{equation}
    Furthermore, the supremum in \eqref{eq:mstar2} is attained in the limit as $k\to\infty$ if $\beta<\beta_*$, and is attained for $k=3$ in the case $\beta\geq\beta_*$, where $\beta_*$ is the threshold defined in \eqref{eq:betastar} depending on $d$ and $\alpha$. Finally, by \cite[Lemma~A.1]{BonCriTop24} we also have
    \begin{equation} \label{eq:inf_mk}
        m_2^0 = \inf_{k\geq 2}m_k^0.
    \end{equation}
\end{remark}

Another important value on $[0,\infty)$ for our discussion is the intersection point of two curves $\e_k$ and $\e_\ell$. Clearly $\e_k$ and $\e_\ell$ intersect when $m=0$ for any $k\neq \ell$. If any two such curves intersect at a nonzero point, then this point is given by
\begin{equation}\label{eq:m_kl}
        m_{k,\ell} \defeq \left(\frac{A_k-A_\ell}{B_k - B_\ell}\right)^{1/q}.
\end{equation}
We will show in Lemma~\ref{lemma:A_k} below that the sequence $(A_k)_{k\geq2}$ is strictly decreasing. Hence the existence of the intersection point $m_{k,\ell}$ depends on the sign of $B_k-B_\ell$, which might however change along the sequence.

In the following lemma we show that the curve $\e_3$ has its maximum exactly at the intersection point $m_{2,3}$ with the curve $\e_2$, see also Figure~\ref{fig:plots_1} (right).

\begin{lemma} \label{lemma:eps_2eps_3}
    It holds $m_{2,3}=m_3^c$.
\end{lemma}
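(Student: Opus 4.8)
The plan is to reduce the identity $m_{2,3}=m_3^c$ to a purely algebraic relation between the quantities $A_2,A_3,B_2,B_3$ from \eqref{eq:A_k_and_B_k}, and then to verify that relation by computing two ratios of eigenvalues. By \eqref{eq:m_kl} we have $m_{2,3}^q=(A_2-A_3)/(B_2-B_3)$, while by Lemma~\ref{lem:basic_prop_eps_k} the nonzero critical point satisfies $(m_3^c)^q=rA_3/((r+q)B_3)$. Hence $m_{2,3}=m_3^c$ holds if and only if $(A_2-A_3)/(B_2-B_3)=rA_3/((r+q)B_3)$, and, after clearing denominators, if and only if
\[
(r+q)\,\frac{A_2}{A_3}=r\,\frac{B_2}{B_3}+q .
\]
Thus everything reduces to determining the two ratios $A_2/A_3$ and $B_2/B_3$. (The computation below will also show $B_2/B_3>1$, so in particular $B_2\neq B_3$ and $m_{2,3}$ is well defined.)

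To compute these ratios I would return to the explicit product formula \eqref{eq:mu}. Writing $\mu_k(\sigma)-\mu_1(\sigma)=C_\sigma\bigl(P_1(\sigma)-P_k(\sigma)\bigr)$, with $C_\sigma$ the Gamma-function prefactor in \eqref{eq:mu} and $P_k(\sigma)\defeq\prod_{j=0}^{k-1}\frac{j-\sigma/2}{j+d-1+\sigma/2}$, a short telescoping of the finite products yields
\[
P_1(\sigma)-P_2(\sigma)=P_1(\sigma)\,\frac{d-1+\sigma}{d+\sigma/2}, \qquad P_1(\sigma)-P_3(\sigma)=P_1(\sigma)\,\frac{d-1+\sigma}{d+\sigma/2}\cdot\frac{d+2}{d+1+\sigma/2},
\]
hence $\bigl(P_1(\sigma)-P_2(\sigma)\bigr)/\bigl(P_1(\sigma)-P_3(\sigma)\bigr)=(d+1+\sigma/2)/(d+2)$. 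Using $\lambda_2-\lambda_1=d+1$, $\lambda_3-\lambda_1=2d+4$ and the definitions \eqref{eq:A_k_and_B_k} (with $\sigma=-\alpha$ for the $A_k$ and $\sigma=\beta$ for the $B_k$; the prefactor $C_\sigma$ and the overall sign cancel in each ratio), one obtains
\[
\frac{A_2}{A_3}=\frac{2d+2-\alpha}{d+1}, \qquad \frac{B_2}{B_3}=\frac{2d+2+\beta}{d+1}.
\]

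It then remains to insert these ratios together with $r=\frac{d-\alpha+1}{d}$ and $q=\frac{\alpha+\beta}{d}$ from \eqref{eq:r_and_q} into the displayed relation. Multiplying through by $d(d+1)$, the required identity becomes
\[
(d+1+\beta)(2d+2-\alpha)=(d-\alpha+1)(2d+2+\beta)+(\alpha+\beta)(d+1),
\]
which is verified by expanding both sides (each equals $2(d+1)^2+(2\beta-\alpha)(d+1)-\alpha\beta$). This proves $m_{2,3}=m_3^c$.

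The one step that takes a little care is the telescoping computation giving $A_2/A_3$ and $B_2/B_3$: one must track exactly which factors of the finite products $P_k$ survive, and keep in mind that $\mu_k(\sigma)-\mu_1(\sigma)$ is positive for $\sigma=-\alpha<0$ but negative for $\sigma=\beta>0$ (cf.\ Remark~\ref{rmk:prop_eigenvalues}), so that the signs combine correctly in the two ratios. Everything else is routine, and once the ratios are in hand the conclusion is a one-line algebraic identity.
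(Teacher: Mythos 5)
Your proof is correct and follows essentially the same route as the paper's: both reduce the identity $m_{2,3}=m_3^c$ to an algebraic relation between the ratios $A_2/A_3$ and $B_2/B_3$ and compute these from the product formula \eqref{eq:mu} (the paper computes the equivalent quantities $1-\frac{A_2}{A_3}=-\frac{d+1-\alpha}{d+1}$ and $1-\frac{B_2}{B_3}=-\frac{d+1+\beta}{d+1}$ using its $a_k,b_k,c_k,d_k$ notation). Your values $\frac{A_2}{A_3}=\frac{2d+2-\alpha}{d+1}$, $\frac{B_2}{B_3}=\frac{2d+2+\beta}{d+1}$ and the concluding polynomial identity all check out.
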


\begin{proof}
    We first introduce a convenient notation for the coefficients $A_k$ and $B_k$, which will also be useful later. We define
    \begin{equation} \label{eq:coeff_1}
        \ell \defeq \frac{d-1}{2}, \quad t\defeq \ell - \frac{\alpha}{2}, \quad \tau \defeq \ell + \frac{\beta}{2},
    \end{equation}
    and for $k\geq 2$
    \begin{equation} \label{eq:abcd}
        a_k \defeq \prod_{j=1}^{k-1}(j+l-t), \quad
        b_k \defeq \prod_{j=1}^{k-1}(j+l+t), \quad
        c_k \defeq \prod_{j=1}^{k-1}(j+l-\tau), \quad
        d_k \defeq \prod_{j=1}^{k-1}(j+l+\tau).
    \end{equation}
    Then, in view of \eqref{eq:A_k_and_B_k} and \eqref{eq:mu}, it is straightforward to deduce the following expressions for all $k\geq 2$:
    \begin{equation} \label{eq:AB}
        A_k = \kappa_{d,\alpha}\frac{1-\frac{a_k}{b_k}}{(k-1)(k+d-1)}\,, \qquad
        B_k = \kappa_{d,\beta}\frac{1-\frac{c_k}{d_k}}{(k-1)(k+d-1)}\,,
    \end{equation}
    for suitable positive constants $\kappa_{d,\sigma}$ depending only on $d$ and $\sigma$.

    We can now prove the identity in the statement. We first compute using \eqref{eq:abcd}
    \begin{align*}
        b_3 - a_3 = (d+2)(d-1-\alpha), \qquad d_3-c_3 = (d+2)(d-1+\beta).
    \end{align*}
    Then by \eqref{eq:AB}
    \begin{align} \label{eq:lemmaeps_2eps_3-1}
        \frac{A_3-A_2}{A_3}
        & = 1 - \frac{A_2}{A_3}
        = 1 - \frac{2(d+2)}{(d+1)}\cdot\frac{(b_2-a_2)}{(b_3-a_3)}\cdot\frac{b_3}{b_2} \nonumber\\
        & = 1 - \frac{2(d+2)}{(d+1)}\cdot\frac{(d-1-\alpha)}{(d+2)(d-1-\alpha)}\cdot(d+1-{\textstyle\frac{\alpha}{2}})
        = - \frac{d+1-\alpha}{d+1}\,,
    \end{align}
    and by an analogous computation
    \begin{align} \label{eq:lemmaeps_2eps_3-2}
        \frac{B_3-B_2}{B_3} = - \frac{d+1+\beta}{d+1}\,.
    \end{align}
    Notice in particular that, from these expressions, $A_2>A_3$ and $B_2>B_3$. Hence by \eqref{eq:m_kl} the two curves $\e_2(m)$ and $\e_3(m)$ always intersect at the point $m_{2,3}>0$. Now, by comparing the expression of $m_{2,3}$ in \eqref{eq:m_kl} and that of $m_3^c$ in Lemma~\ref{lem:basic_prop_eps_k}, we have that $m_{2,3}=m_3^c$ if and only if
    \begin{equation*}
        \frac{A_3-A_2}{B_3-B_2} = \frac{(d+1-\alpha)}{(d+1+\beta)}\cdot\frac{A_3}{B_3}\,,
    \end{equation*}
    and it is easily seen that this identity holds thanks to \eqref{eq:lemmaeps_2eps_3-1} and \eqref{eq:lemmaeps_2eps_3-2}.
\end{proof}

In the following technical preparatory lemma, we prove the strict monotonicity of the sequence $(A_k)_{k\geq2}$. Since $\e_k(m)=A_k m^r + o(m^r)$ as $m\to0^+$, where $\lim_{m\to0^+}\frac{o(m^r)}{m^r}=0$, the coefficient $A_k$ determines the behaviour of the curve $\e_k(\cdot)$ at the origin. In particular, as a consequence of the lemma, we have that $\e_k(m)>\e_{k+1}(m)$ for $m$ in a right neighbourhood of the origin, for all $k\geq2$.

\begin{lemma} \label{lemma:A_k}
    The sequence $(A_k)_{k\geq2}$ is strictly decreasing.
\end{lemma}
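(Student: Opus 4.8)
The plan is to reuse the notation $a_k,b_k$ introduced in the proof of Lemma~\ref{lemma:eps_2eps_3} and set $P_k\defeq a_k/b_k=\prod_{j=1}^{k-1}\frac{j+\alpha/2}{\,j+d-1-\alpha/2\,}$, so that by \eqref{eq:AB} one has $A_k=\kappa_{d,\alpha}\frac{1-P_k}{(k-1)(k+d-1)}$ for every $k\geq2$. Since $\alpha\in(0,d-1)$, every factor defining $P_k$ is strictly less than $1$, hence $P_1=1$, $P_k\in(0,1)$ for $k\geq2$, and the sequence $(P_k)_k$ is strictly decreasing. The inequality to prove, $A_{k+1}<A_k$, is thus equivalent (all factors being positive) to $\frac{(k-1)(k+d-1)}{k(k+d)}\cdot\frac{1-P_{k+1}}{1-P_k}<1$, i.e.\ to a comparison between the growth rate of $(k-1)(k+d-1)$ and of $1-P_k$.

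The key identity I would use is the telescoping representation
\[
1-P_k=\sum_{m=1}^{k-1}\bigl(P_m-P_{m+1}\bigr)=(d-1-\alpha)\sum_{m=1}^{k-1}\frac{P_m}{m+d-1-\alpha/2}\,,
\]
which follows from $P_m-P_{m+1}=P_m\bigl(1-\tfrac{m+\alpha/2}{m+d-1-\alpha/2}\bigr)=P_m\tfrac{d-1-\alpha}{m+d-1-\alpha/2}$ (note $d-1-\alpha>0$). Writing $S_k\defeq\sum_{m=1}^{k-1}\frac{P_m}{m+d-1-\alpha/2}$ and substituting into the formula for $A_k$, the inequality $A_{k+1}<A_k$ becomes, after clearing denominators and using $S_{k+1}=S_k+\frac{P_k}{k+d-1-\alpha/2}$ together with $(k)(k+d)-(k-1)(k+d-1)=2k+d-1$,
\[
\frac{(k-1)(k+d-1)\,P_k}{k+d-1-\alpha/2}<(2k+d-1)\,S_k .
\]

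To close the estimate I would bound $S_k$ from below by discarding the fact that the summands with small $m$ are larger: since $(P_m)_m$ is decreasing and the denominators $m+d-1-\alpha/2$ are increasing in $m$, each of the $k-1$ terms is at least $\frac{P_k}{k+d-2-\alpha/2}$ (with strict inequality, e.g.\ for $m=k-1$), so $S_k>(k-1)\frac{P_k}{k+d-2-\alpha/2}$. Plugging this in, cancelling the positive factor $(k-1)P_k$ and cross-multiplying reduces the claim to the polynomial inequality $(k+d-2-\alpha/2)(k+d-1)<(2k+d-1)(k+d-1-\alpha/2)$, which upon expansion is equivalent to $(k+1)(k+d-1)>\tfrac{k\alpha}{2}$ — clearly true for all $k\geq2$ because $\tfrac{\alpha}{2}<\tfrac{d-1}{2}$ and $(k+1)(k+d-1)=k^2+dk+(d-1)$ dominates $\tfrac{k(d-1)}{2}$. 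The only genuinely delicate points are getting the telescoping bookkeeping right and correctly carrying out the reduction to the polynomial inequality; once the crude lower bound on $S_k$ is available, the argument collapses to an elementary check, so I do not anticipate a serious obstacle.
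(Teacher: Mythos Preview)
Your argument is correct and considerably more economical than the paper's. The paper reduces $A_k>A_{k+1}$ to the positivity of $\eta_k-\sigma_k$, where $\eta_k=\frac{2k+d-1}{k-1}\bigl(\frac{b_k}{a_k}-1\bigr)$ and $\sigma_k=\frac{2t(k+d-1)}{k+\ell+t}$; it then observes that $(\sigma_k)_k$ is decreasing and proves by induction that $\eta_k\ge\eta_2$ for all $k\ge2$, the inductive step requiring a somewhat laborious analysis of an explicit quadratic $Q_{d,\alpha}(k)$ in $k$ (locating the vertex of the parabola, checking $Q_{d,\alpha}(2)>0$ by hand, and so on). Your route instead writes $1-P_k$ as the telescoping sum $(d-1-\alpha)\sum_{m=1}^{k-1}\frac{P_m}{m+d-1-\alpha/2}$, bounds this sum below by $(k-1)$ times its smallest term, and reduces everything to the one-line inequality $(k+1)(k+d-1)>k\alpha/2$. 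The paper's monotonicity-in-two-directions scheme is a robust template that can be reused when the summand structure is less transparent; your approach trades that extra structure for directness and yields the cleaner proof of this particular lemma. One cosmetic remark: your crude bound actually gives $S_k\ge(k-1)\frac{P_{k-1}}{k+d-2-\alpha/2}$, and it is the further step $P_{k-1}>P_k$ that produces the factor $P_k$ needed for cancellation---worth saying explicitly, since that is precisely where the strict inequality enters.
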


\begin{proof}
    Notice that the inequality $A_2>A_3$ was already established in the proof of Lemma~\ref{lemma:eps_2eps_3}, see in particular \eqref{eq:lemmaeps_2eps_3-1}. In view of the expression \eqref{eq:AB} of $A_k$, the inequality $A_k>A_{k+1}$ is equivalent to
    \begin{equation}\label{eq:lemAk-1}
        \frac{1-\frac{a_{k+1}}{b_{k+1}}}{k(k+d)} < \frac{1-\frac{a_k}{b_k}}{(k-1)(k+d-1)},
    \end{equation}
    where the sequences $(a_k)_{k\geq2}$ and $(b_k)_{k\geq2}$ are defined in \eqref{eq:abcd}. After some algebraic manipulations, we see that \eqref{eq:lemAk-1} is equivalent to
    \begin{equation*}
        (2k+d-1)b_k b_{k+1} - k(k+d)a_k b_{k+1} + (k-1)(k+d-1)b_k a_{k+1}>0,
    \end{equation*}
    and in turn, dividing by $a_k b_{k+1}$, to
    \begin{equation*}
        (2k+d-1)\frac{b_k}{a_k} - k(k+d) + (k-1)(k+d-1)\frac{(k+\ell-t)}{(k+\ell+t)}>0,
    \end{equation*}
    and, finally, to 
    \begin{equation*}
        \frac{(2k+d-1)}{k-1}\biggl(\frac{b_k}{a_k}-1\biggr) - \frac{2t(k+d-1)}{(k+\ell+t)}>0.
    \end{equation*}
    We therefore define the sequences
    \begin{equation*}
        \eta_k \defeq \frac{(2k+d-1)}{k-1}\biggl(\frac{b_k}{a_k}-1\biggr) \qquad\text{and}\qquad \sigma_k \defeq \frac{2t(k+d-1)}{(k+\ell+t)},
    \end{equation*}
    so that the conclusion $A_k>A_{k+1}$ amounts to show the inequality $\eta_k-\sigma_k>0$ for all $k\geq2$.
    
    Notice that $\eta_2-\sigma_2>0$, as this inequality is equivalent to $A_2>A_3$ which has already been established, as observed at the beginning of the proof. Moreover, it is easily seen that $(\sigma_k)_{k\geq2}$ is a strictly decreasing sequence. Therefore, if we show that $\eta_k\geq\eta_2$ for all $k\geq3$, it would follow that $\eta_k-\sigma_k > \eta_2 - \sigma_2 >0$ for all $k\geq3$, which is the desired conclusion.

    \medskip
    To conclude the proof, it only remains to show the inequality $\eta_k\geq\eta_2$ for all $k\geq2$. We proceed by induction. After some algebraic manipulations, the inequality $\eta_k\geq\eta_2$ is equivalent to
    \begin{equation}
        \frac{b_k}{a_k} \geq 1 + \frac{2t(d+3)(k-1)}{(2k+d-1)(1+\ell-t)}\,. \tag*{$(\square)_{k}$} \label{eq:lemAk-induction}
    \end{equation}
    Assume now that \ref{eq:lemAk-induction} is true for some $k\geq2$, and let us prove $(\square)_{k+1}$. We have
    \begin{align*}
        \frac{b_{k+1}}{a_{k+1}} - \Biggl[ & 1 + \frac{2t(d+3)k}{(2k+d+1)(1+\ell-t)} \Biggr]
        = \frac{(k+\ell+t)}{(k+\ell-t)}\frac{b_k}{a_k} - \Biggl[ 1 + \frac{2t(d+3)k}{(2k+d+1)(1+\ell-t)} \Biggr] \\
        & \geq \frac{(k+\ell+t)}{(k+\ell-t)}\Biggl[ 1 + \frac{2t(d+3)(k-1)}{(2k+d-1)(1+\ell-t)} \Biggr] - \Biggl[ 1 + \frac{2t(d+3)k}{(2k+d+1)(1+\ell-t)} \Biggr] \\
        & = \frac{2t}{k+\ell-t} + \frac{(k+\ell+t)}{(k+\ell-t)}\cdot\frac{2t(d+3)(k-1)}{(2k+d-1)(1+\ell-t)} - \frac{2t(d+3)k}{(2k+d+1)(1+\ell-t)}\,,
    \end{align*}
    where we used the induction assumption \ref{eq:lemAk-induction} in the second passage. The goal is to show that the previous quantity is nonnegative, that is, after converting to a (positive) common denominator, to show that
    \begin{multline} \label{eq:lemAk-3}
        Q_{d,\alpha}(k)\defeq (1+\ell-t)(2k+d-1)(2k+d+1)+(2k+d+1)(k+\ell+t)(d+3)(k-1)\\-(2k+d-1)(k+\ell-t)(d+3)k \geq0.
    \end{multline}
    Again by elementary algebraic manipulations we find
    \begin{align*}
        Q_{d,\alpha}(k)
        & = (1+\ell-t)(2k+d-1)(2k+d+1)+(d+3)\Bigl[2kt(2k+d-1)-(d+1)(k+\ell+t)\Bigr] \\
        & = A_{d,\alpha}k^2 + B_{d,\alpha} k + C_{d,\alpha},
    \end{align*}
    with
    \begin{gather*}
        A_{d,\alpha} \defeq 4(1+\ell+2t+td)>0, \\
        B_{d,\alpha} \defeq 4d(1+\ell-t)+(d+3)\bigl[2t(d-1)-d-1\bigr].
    \end{gather*}
    Hence the sequence of points $(Q_{d,\alpha}(k))_{k\geq2}$ are on a parabola whose vertex has $x$-coordinate $v=-\frac{B_{d,\alpha}}{2A_{d,\alpha}}$. We can compute
    \begin{align*}
        v-2 & = \frac{1}{2A_{d,\alpha}}\bigl(-B_{d,\alpha}-4A_{d,\alpha}\bigr) \\
        & = \frac{1}{2A_{d,\alpha}}\Bigl[ -4d(1+\ell) + (d+3)(d+1) -16(1+\ell) -2t(d^2+8d+13)\Bigr] \\
        & \leq \frac{1}{2A_{d,\alpha}}\Bigl[ -4d(1+\ell) + (d+3)(d+1) -16(1+\ell) \Bigr]
        = -\frac{(d+5)(d+1)}{2A_{d,\alpha}}<0,
    \end{align*}
    where we used the fact that $t=\ell-\frac{\alpha}{2}\geq0$ and $A_{d,\alpha}>0$ in the third passage.
    
    Hence the $x$-coordinate $v$ of the vertex of the parabola is smaller than 2, so that the sequence $(Q_{d,\alpha}(k))_{k\geq2}$ is increasing for $k\geq2$ and in particular $Q_{d,\alpha}(k)\geq Q_{d,\alpha}(2)$ for all $k\geq2$. We can explicitly compute, using the definition \eqref{eq:coeff_1} of $\ell$ and $t$ and the assumption $\alpha<d-1$,
    \begin{align*}
        Q_{d,\alpha}(2) &= (d+3)\bigl[ d^2 + 3d - 2 -\alpha(d+3) \bigr] \\
        & \geq (d+3)\bigl[ d^2 + 3d - 2 -(d-1)(d+3) \bigr] = (d+3)(d+1)>0.
    \end{align*}
    We conclude that $Q_{d,\alpha}(k)>0$ for all $k\geq2$. This proves \eqref{eq:lemAk-3} and, in turn, $(\square)_{k+1}$, concluding the proof of the lemma.
\end{proof}


\bigskip

\subsection{Properties of $\e(m)$}
We now qualitatively describe the shape of the function $\e(m)$, defined in \eqref{eq:eps_func}, which according to Proposition~\ref{prop:stab_ball} separates the stability/instability regions of the unit ball $B_1$ for the functional $\enm$. In particular, in this subsection we complete the proof of Theorem~\ref{thm:stability}. We start by proving some general properties of the function $\e(m)$.

\begin{proposition} \label{prop:eps-reg}
Let $m_*>0$ be defined by \eqref{eq:mstar}, depending on $d\geq2$, $\alpha\in(0,d-1)$, $\beta>0$. Then $\e(0)=0$, $\e(m)>0$ for $m\in(0,m_*)$, and $\e(m)=0$ for $m\geq m_*$.

For each $m\in(0,m_*)$, the supremum in \eqref{eq:eps_func} is attained at some finite $\bar{k}(m)$. The function $\e(m)$ is globally continuous and is locally Lipschitz continuous in $(0,m_*)$. Moreover one has the bound $\e(m)\leq A_2 m^{(d-\alpha+1)/d}$, where $A_2$ is as in \eqref{eq:AB}.
\end{proposition}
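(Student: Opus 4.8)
The statement of Proposition~\ref{prop:eps-reg} bundles together several claims about $\e(m)=\sup_{k\ge2}\e_k(m)$, and I would prove them roughly in this order, leaning heavily on Lemmas~\ref{lem:basic_prop_eps_k}, \ref{lemma:eps_2eps_3}, \ref{lemma:A_k}, and Remarks~\ref{rmk:mstar}, \ref{rmk:mstar2}.

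First, the \emph{sign and vanishing} of $\e(m)$. Since $\e_1\equiv0$ we always have $\e(m)\ge0$. For $m\ge m_*$: by \eqref{eq:mstar2} we have $m\ge m_k^0$ for every $k\ge2$, and since $\e_k$ is increasing then decreasing with roots at $0$ and $m_k^0$ (Lemma~\ref{lem:basic_prop_eps_k}(i),(iii)), it follows that $\e_k(m)\le0$ for $m\ge m_k^0$; taking the supremum gives $\e(m)=0$ (attained by $k=1$). For $0<m<m_*$: by definition of $m_*$ as the supremum of the roots $m_k^0$, there exists $k$ with $m_k^0>m$, hence $\e_k(m)>0$ on $(0,m_k^0)$, so $\e(m)>0$. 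And $\e(0)=\sup_k\e_k(0)=0$ since every $\e_k$ vanishes at the origin. This handles the first paragraph of the statement.

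Second, \emph{attainment at a finite $\bar k(m)$ on $(0,m_*)$}. The key is a uniform decay estimate: I claim $\e_k(m)\le A_k m^r$ and $A_k\to0$ as $k\to\infty$. The bound $\e_k(m)=m^r(A_k-B_km^q)\le A_k m^r$ is immediate since $B_k>0$ (Remark~\ref{rmk:prop_eigenvalues}). For $A_k\to0$: from \eqref{eq:AB}, $A_k=\kappa_{d,\alpha}\,(1-a_k/b_k)/((k-1)(k+d-1))$, and since $0<1-a_k/b_k<1$ (as $a_k<b_k$, because $j+\ell-t<j+\ell+t$ when $t>0$, using $\alpha<d-1$), we get $A_k=O(1/k^2)\to0$. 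In particular $A_2 m^r$ dominates all $\e_k$, giving the final bound $\e(m)\le A_2 m^r$. Now on $(0,m_*)$ we have $\e(m)>0$, so only finitely many $k$ satisfy $A_k m^r\ge\e(m)$, and the supremum over the remaining (compact) index set is attained; combined with Lemma~\ref{lemma:A_k} ($A_k$ strictly decreasing) one can in fact produce an explicit upper bound for $\bar k(m)$ depending on $m$.

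Third, \emph{continuity and local Lipschitz continuity}. Each $\e_k$ is smooth on $[0,\infty)$, and on every compact subinterval $[\delta,M]\subset(0,m_*)$ the supremum is, by the argument above, the maximum of a \emph{finite} collection of $\e_k$'s (those finitely many $k$ with $A_k M^r\ge\inf_{[\delta,M]}\e>0$); a finite max of $C^1$ functions is locally Lipschitz, giving local Lipschitz continuity on $(0,m_*)$. Global continuity requires also checking the endpoints: at $m=0$, continuity follows from $0\le\e(m)\le A_2m^r\to0$; at $m=m_*$, one shows $\e(m)\to0$ as $m\to m_*^-$ — here I would use that $\e(m)=\max_{k\le N(M)}\e_k(m)$ for $m$ near $m_*$ with $N(M)$ fixed, and each such $\e_k$ is continuous with $\e_k(m_*)\le0$ (since $m_*\ge m_k^0$), so the finite max is continuous and $\le0$ at $m_*$, forcing $\e(m_*)=0$ and continuity there.

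\textbf{Main obstacle.} The delicate point is making the ``finitely many relevant indices'' argument fully rigorous uniformly near the right endpoint $m_*$: one must ensure that the cutoff index $N$ can be chosen \emph{locally uniform} in $m$ (so that the finite-max representation, hence continuity, survives up to and including $m=m_*$), which rests precisely on the quantitative decay $A_k=O(k^{-2})$ together with a lower bound of the form $\e(m)\ge c\,m^r$ near $0$ (e.g.\ $\e(m)\ge\e_2(m)=m^r(A_2-B_2m^q)>0$ for $m$ small) — for $m$ bounded away from $0$ one just uses positivity on $(0,m_*)$. Once the uniform-finiteness is in hand, everything else is routine.
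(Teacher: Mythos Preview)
Your overall approach is essentially the paper's, and the parts on sign/vanishing, attainment at a finite index, local Lipschitz continuity on compact subintervals of $(0,m_*)$, and the bound $\e(m)\le A_2 m^r$ are all correct; your continuity argument at $m=0$ via the sandwich $0\le\e(m)\le A_2 m^r$ is in fact cleaner than what the paper does.

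There is, however, a genuine gap exactly where you flag it, and your proposed resolution does not work. Your claim that ``$\e(m)=\max_{k\le N}\e_k(m)$ for $m$ near $m_*$ with $N$ fixed'' is \emph{false} when $\beta<\beta_*$: in that regime (Remark~\ref{rmk:mstar2}) one has $m_k^0<m_*$ for every $k$ with $m_k^0\to m_*$, so for any fixed $N$ and any $m\in(\max_{k\le N}m_k^0,\,m_*)$ we have $\e_k(m)\le0$ for all $k\le N$, whereas $\e(m)>0$. The index $\bar k(m)$ realizing the supremum therefore runs off to infinity as $m\to m_*^-$, and no finite-max representation is available in any left-neighbourhood of $m_*$. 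Your fallback ``for $m$ bounded away from $0$ one just uses positivity on $(0,m_*)$'' does not rescue this: that positivity degenerates as $m\to m_*^-$ and cannot produce a uniform cutoff there.

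The fix uses tools you already have. From $\e_k(m)\le A_k m^r$ and $A_k\to0$ you get \emph{uniform} convergence $\e_k(m)\to0$ on $[0,m_*]$. Given $\eta>0$, choose $N$ with $\sup_{m\in[0,m_*]}\e_k(m)<\eta$ for all $k>N$, so that
\[
\e(m)\le\max\Bigl\{\eta,\ \max_{2\le k\le N}\e_k(m)\Bigr\}\qquad\text{for all }m\in[0,m_*].
\]
The finite maximum on the right is continuous and nonpositive at $m_*$ (since $m_*\ge m_k^0$ for each $k$), hence $\limsup_{m\to m_*^-}\e(m)\le\eta$; as $\eta>0$ is arbitrary, continuity at $m_*$ follows. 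This $\eta$-approximation, rather than an exact finite-max representation, is precisely how the paper argues.
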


\begin{proof}
Obviously $\e(0)=0$, as $\e_k(0)=0$ for all $k$. Recalling the identity \eqref{eq:mstar2}, it is immediate to deduce that, if $m\in(0,m_*)$, we have $m<m_{\tilde{k}}^0$ for some $\tilde{k}\geq2$ and hence $\e(m)\geq\e_{\tilde{k}}(m)>0$. On the contrary, if $m\geq m_*$ then all the functions $\e_k$ vanish at $m$, and therefore $\e(m)=0$.

Next, notice that $A_k\to0$ and $B_k\to0$ as $k\to\infty$, as a consequence of the definition \eqref{eq:A_k_and_B_k} of $A_k$ and $B_k$, of the fact that $\lambda_k\to\infty$, and of the boundedness of the sequences $(\mu_k(-\alpha))_k$ and $(\mu_k(\beta))_k$, see Remark~\ref{rmk:prop_eigenvalues}. It follows that $\lim_{k\to\infty}\e_k(m)=0$ for any fixed $m>0$, and this convergence is uniform in $[0,m_*]$.

Let us now prove that $\e(m)$ is Lipschitz continuous in each interval $[m_1,m_2]\subset(0,m_*)$. Again by \eqref{eq:mstar2}, there exists $\tilde{k}\geq2$ such that $c\defeq\inf_{m\in[m_1,m_2]}\e_{\tilde{k}}(m)>0$. Since $\e_k(m)\to0$ uniformly in $[m_1,m_2]$ as $k\to\infty$, we have $\sup_{m\in[m_1,m_2]}\e_k(m)<c$ for all $k$ sufficiently large, that is, in the interval $[m_1,m_2]$ the supremum \eqref{eq:eps_func} defining $\e(m)$ is actually a maximum of finitely many smooth functions. Therefore $\e(m)$ is Lipschitz in $[m_1,m_2]$, as claimed.

It only remains to show the continuity of $\e(m)$ at $0$ and at $m_*$. Fix $\eta>0$. Then, by the uniform convergence of $\e_k(m)\to0$ as $k\to\infty$, we have $\sup_m\e_k(m)<\eta$ for all $k\geq k_\eta$, for some $k_\eta\in\N$. Moreover, by \eqref{eq:mstar2}, for every $k\geq2$ there exists $\delta_k>0$ such that $\e_k(m)<\eta$ for all $m\in[m_*-\delta_k,m_*]$. Then, taking $\delta=\min_{2\leq k \leq k_\eta}\delta_k>0$, one has $\e_k(m)\leq \eta$ for all $m\in[m_*-\delta,m_*]$ and for all $k\geq2$, and hence $0\leq \e(m)\leq\eta$ for all $m\in[m_*-\delta,m_*]$. This shows the continuity from the left of $\e(m)$ at $m_*$.

The continuity at $0$ follows by an analogous argument, using the convergence $\e_k(m)\to0$ as $m\to0$, and the uniform convergence $\e_k(m)\to0$ as $k\to\infty$.

Finally, the bound $\e(m)\leq A_2 m^{(d-\alpha+1)/d}$ is immediate by \eqref{eq:eps_k}, \eqref{eq:A_k_and_B_k}, and Lemma~\ref{lemma:A_k}.
\end{proof}

We can more accurately describe the profile $\e(m)$ when $\beta\geq \beta_*$, where $\beta_*$ is the threshold defined in \eqref{eq:betastar} depending on $d$ and $\alpha$. Indeed, it turns out that in this case the supremum defining $\e(m)$ in \eqref{eq:eps_func} is attained either at $k=2$ or at $k=3$. See Figure~\ref{fig:betalarge} for an illustration.

\begin{figure}
	\begin{center}
		\includegraphics[width=8cm]{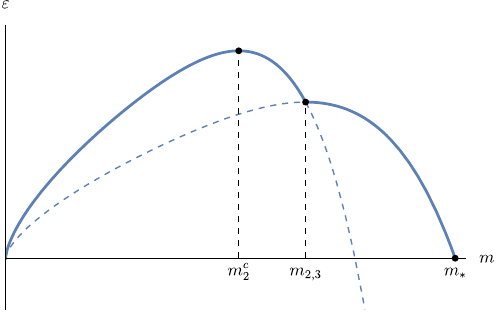}
	\end{center}
	\caption{The function $\e(m)$ in the case $\beta\geq\beta_*$ is just the maximum between the first two curves $\e_2(m)$ and $\e_3(m)$, see Proposition~\ref{prop:eps-betalarge} (the plot is obtained for $d=6$, $\alpha=3$, $\beta=30$).}
	\label{fig:betalarge}
\end{figure}

\begin{proposition}[Shape of $\e(m)$ in the case $\beta\geq\beta_*$] \label{prop:eps-betalarge}
Assume that $\beta\geq\beta_*$, where $\beta_*$ is defined in \eqref{eq:betastar}. Then
\begin{equation} \label{eq:eps-betalarge}
    \e(m) =
    \begin{cases}
        \e_2(m) & \text{if }m\in[0,m_{2,3}],\\
        \e_3(m) & \text{if }m\in[m_{2,3},m_*],\\
        0 & \text{if }m\geq m_*,
    \end{cases}
\end{equation}
where $m_*$ is defined in \eqref{eq:mstar}, and $m_{2,3}$ is the intersection point of $\e_2$ and $\e_3$, see \eqref{eq:m_kl}. The function $\e(m)$ is increasing for $m\in[0,m_2^c]$ and decreasing for $m\in[m_2^c,m_*]$, where $m_2^c$ is the maximum point of the curve $\e_2$ (see Lemma~\ref{lem:basic_prop_eps_k}). The previous points are ordered as $0<m_2^c<m_{2,3}<m_*$.
\end{proposition}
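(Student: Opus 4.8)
The plan is to work with the explicit form $\e_k(m)=m^r(A_k-B_k m^q)$ from \eqref{eq:eps_k}--\eqref{eq:A_k_and_B_k}, and to exploit the structural information already available: the strict monotonicity $A_2>A_3>A_4>\cdots$ (Lemma~\ref{lemma:A_k}); the explicit ratios $A_2/A_3=(2d+2-\alpha)/(d+1)$ and $B_2/B_3=(2d+2+\beta)/(d+1)$ together with $A_2>A_3$, $B_2>B_3$ (read off from \eqref{eq:lemmaeps_2eps_3-1}--\eqref{eq:lemmaeps_2eps_3-2}); the identity $m_{2,3}=m_3^c$ (Lemma~\ref{lemma:eps_2eps_3}); and -- this is the only place the hypothesis enters -- the facts $m_*=m_3^0$ and $m_k^0\le m_*$ for every $k\ge2$, valid when $\beta\ge\beta_*$ by Remark~\ref{rmk:mstar2} and \eqref{eq:mstar2}.

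\textbf{Step 1: the key estimate $\e_k(m)\le\e_3(m)$ for every $k\ge4$ and $m\in[0,m_*]$.} I would write $\e_3(m)-\e_k(m)=m^r\bigl[(A_3-A_k)-(B_3-B_k)m^q\bigr]$. Since $A_3>A_k$ by Lemma~\ref{lemma:A_k}, if $B_3\le B_k$ the bracket is nonnegative for all $m\ge0$. If $B_3>B_k$, the bracket is nonnegative precisely for $m^q\le(A_3-A_k)/(B_3-B_k)$, so it is enough to check $(m_*)^q\le(A_3-A_k)/(B_3-B_k)$. Using $(m_*)^q=(m_3^0)^q=A_3/B_3$, a cross-multiplication (both denominators being positive) reduces this to $A_k/B_k\le A_3/B_3$, i.e.\ to $m_k^0\le m_3^0=m_*$, which is \eqref{eq:mstar2}. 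Since also $\e_1\equiv0\le\e_3$ on $[0,m_*]$ (as $\e_3\ge0$ there), this yields $\e(m)=\sup_{k\ge1}\e_k(m)=\max\{\e_2(m),\e_3(m)\}$ for all $m\in[0,m_*]$; for $m\ge m_*$ we already know $\e(m)=0$ from Proposition~\ref{prop:eps-reg}.

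\textbf{Step 2: comparing the first two curves, and the ordering.} Writing $\e_2(m)-\e_3(m)=m^r\bigl[(A_2-A_3)-(B_2-B_3)m^q\bigr]$ with both coefficients strictly positive, and recalling $(m_{2,3})^q=(A_2-A_3)/(B_2-B_3)$ from \eqref{eq:m_kl}, one gets $\e_2>\e_3$ on $(0,m_{2,3})$ and $\e_2<\e_3$ on $(m_{2,3},\infty)$; combined with Step 1 this is exactly the piecewise formula \eqref{eq:eps-betalarge}. For the chain $0<m_2^c<m_{2,3}<m_*$: by Lemma~\ref{lemma:eps_2eps_3}, $m_{2,3}=m_3^c$, and $m_3^c<m_3^0=m_*$ because $m_k^c=(rA_k/((r+q)B_k))^{1/q}<(A_k/B_k)^{1/q}=m_k^0$; and $m_2^c<m_3^c$ is equivalent to $A_2/B_2<A_3/B_3$, i.e.\ to $(A_2/A_3)/(B_2/B_3)<1$, which follows from $A_2/A_3=(2d+2-\alpha)/(d+1)<(2d+2+\beta)/(d+1)=B_2/B_3$.

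\textbf{Step 3: monotonicity, and the main obstacle.} By \eqref{eq:eps-betalarge}: on $[0,m_{2,3}]$ one has $\e=\e_2$, increasing on $[0,m_2^c]$ and decreasing on $[m_2^c,m_{2,3}]$ by Lemma~\ref{lem:basic_prop_eps_k}(iii) (valid since $m_2^c<m_{2,3}$); on $[m_{2,3},m_*]$ one has $\e=\e_3$, decreasing there since $m_{2,3}=m_3^c$ is the maximum point of $\e_3$. Hence $\e$ is increasing on $[0,m_2^c]$ and decreasing on $[m_2^c,m_*]$, completing the proof. The bulk of the argument is bookkeeping on top of the earlier lemmas; the one genuinely new point -- and the main obstacle -- is the ratio estimate $(A_3-A_k)/(B_3-B_k)\ge A_3/B_3$ in the case $B_3>B_k$, whose reduction to the known inequality $m_k^0\le m_*$ is precisely where the hypothesis $\beta\ge\beta_*$ is essential: only then is $m_*=m_3^0$, rather than the strictly larger limiting value obtained as $k\to\infty$, which is exactly what makes the clean two-curve description fail when $\beta<\beta_*$.
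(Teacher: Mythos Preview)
Your proof is correct and follows essentially the same approach as the paper: reduce $\e(m)$ to $\max\{\e_2(m),\e_3(m)\}$ on $[0,m_*]$ by showing $\e_k\le\e_3$ for $k\ge4$, then use $m_{2,3}=m_3^c$ to obtain the piecewise description and the monotonicity. The only cosmetic difference is in Step~1: the paper argues geometrically by contradiction (if $m_{3,k}<m_*$ then $m_k^0>m_3^0$, contradicting \eqref{eq:mstar2}), whereas you reduce the same inequality directly via the algebraic identity $(A_3-A_k)/(B_3-B_k)\ge A_3/B_3\Leftrightarrow A_k/B_k\le A_3/B_3$; both are the same argument in different clothing.
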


\begin{proof}
First notice, as recalled in Remark~\ref{rmk:mstar2}, that by the results of \cite{BonCriTop24} in the case $\beta\geq\beta_*$ we have $m_*=m_3^0$, where $m_3^0$ is the positive root of the function $\e_3(m)$. Moreover, by \eqref{eq:mstar2} $m_k^0\leq m_*$ for all $k\geq2$.

We first claim that
\begin{equation} \label{proof-betalarge-1}
    \e_k(m)\leq \e_3(m) \qquad\text{for all }m\in[0,m_*], \text{ for all }k>3.
\end{equation}
Indeed, by Lemma~\ref{lemma:A_k} we have $A_k<A_3$ for all $k>3$. Since $\e_k(m)=A_k m^r + o(m^r)$ as $m\to0^+$, where $\lim_{m\to0^+}\frac{o(m^r)}{m^r}=0$, it follows that the inequality \eqref{proof-betalarge-1} holds for all $m$ in a sufficiently small neighbourhood of the origin. The two curves $\e_3$ and $\e_k$ intersect at most once beyond the origin, at the point $m_{3,k}$ (see \eqref{eq:m_kl}). If by contradiction $m_{3,k}<m_*$, which would violate the inequality \eqref{proof-betalarge-1}, then necessarily $m_k^0>m_3^0$ (or else the two curves would intersect a second time beyond the origin). However, since $m_3^0=m_*$ as observed at the beginning of the proof, this contradicts the inequality $m_k^0\leq m_*$. The claim \eqref{proof-betalarge-1} follows.

As a consequence of $\eqref{proof-betalarge-1}$, we have
\begin{equation} \label{proof-betalarge-2}
    \e(m) = \max\bigl\{\e_2(m),\e_3(m)\bigr\} \qquad\text{for all }m\in[0,m_*].
\end{equation}
By Lemma~\ref{lemma:eps_2eps_3}, the two functions $\e_2$ and $\e_3$ intersect at the point $m_{2,3}=m_3^c$ where $\e_3$ has its maximum. In particular $\e_3(m_{2,3})>0$ and therefore $m_{2,3}<m_*$. Since $A_2>A_3$ by Lemma~\ref{lemma:A_k}, we have as in the previous part of the proof that close to the origin $\e_2(m)>\e_3(m)$, and since the two curves intersect only at $m_{2,3}$ we obtain \eqref{eq:eps-betalarge}.

Clearly $m_2^c<m_{2,3}$, and $\e(m)$ is increasing in $[0,m_2^c]$ and decreasing in $[m_2^c,m_{2,3}]$, as it coincides with $\e_2(m)$ in these two intervals. Since $m_{2,3}$ coincides with the maximum point of $\e_3(m)$, we have that $\e_3(m)$ (that is, $\e(m)$) is decreasing in $[m_{2,3},m_*]$.
\end{proof}

\medskip

\begin{remark} The picture in the case $0<\beta<\beta_*$ is more involved, due to the fact that one has to consider \emph{all} curves $\e_k(m)$ when finding their supremum and not simply the first two. We discuss this issue more in detail, and provide some numerical insights and conjectures in Section~\ref{sec:numerics}.
\end{remark}


\section{Global minimality}\label{sec:global_min}


Before we characterize the parameter regimes where the unit ball is the global minimizer, we prove that a minimizer of $\enm$ exists \emph{for any} $\e,m>0$. This improves the existence result in \cite[Lemma~6.1]{Asc22} where the author obtains the existence of a minimizer only for large values of $m$. For notational simplification, we state and prove the existence results for the unscaled functional $\en$.

\begin{proposition}\label{prop:existence}
    Let $d\geq 2$, $\alpha\in(0,d)$, $\beta>0$. Then for any $\e>0$ and $V>0$ the problem \eqref{eq:min} admits a minimizer.
\end{proposition}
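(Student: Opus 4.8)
The plan is to use the direct method of the calculus of variations, with the key difficulty being the lack of a priori compactness: minimizing sequences could in principle split into several components escaping to infinity. The attractive term $\nlb$ provides a confining mechanism that prevents this, and I would exploit it quantitatively. First I would fix a minimizing sequence $(E_n)$ for \eqref{eq:min} with $|E_n|=V$ and $\en(E_n)\to\inf$; since $\en(B[V])<\infty$ we may assume $\sup_n \en(E_n)=:\Lambda<\infty$, which in particular bounds $\per(E_n)\leq \Lambda/\e$ and $\nlb(E_n)\leq\Lambda$ uniformly. The perimeter bound is where the penalization $\e\,\per$ is essential (and why, unlike \cite{Asc22}, no largeness of $m$ or $V$ is needed): it gives compactness in $BV$ up to translations on bounded sets via the standard concentration–compactness/truncation argument for the isoperimetric-type problems (cf. \cite{FrLi2015} for the relevant compactness result, or the scheme in \cite{FFMMM,BoCr14}).

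The main step is to rule out dichotomy. Here I would argue as follows: by the Frank–Lieb compactness lemma \cite{FrLi2015} (applied, after the uniform perimeter bound, to the sequence truncated to large balls), either $(E_n)$ converges in $L^1_{\mathrm{loc}}$ up to translation to a set $E$ with $|E|=V$, or the mass splits, i.e.\ there is $0<\theta<V$ such that a portion of volume roughly $\theta$ separates at distance $\to\infty$ from the rest. In the splitting case, the bound $\nlb(E_n)\leq\Lambda$ gives a contradiction: if two pieces of volumes $\geq c>0$ sit at mutual distance $D_n\to\infty$, then $\nlb(E_n)\geq \iint_{\text{piece}_1\times\text{piece}_2}|x-y|^\beta\dd x\dd y \geq c^2 D_n^\beta\to\infty$ since $\beta>0$, contradicting the uniform bound. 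Hence no mass is lost at infinity and, after translation, $E_n\to E$ in $L^1$ with $|E|=V$.

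It then remains to pass to the limit in the energy. Lower semicontinuity of the perimeter under $L^1$ convergence is classical; for the nonlocal terms one can pass to the limit directly — $\nlb$ is lower semicontinuous under $L^1_{\mathrm{loc}}$ convergence by Fatou applied to $\mathbbm{1}_{E_n}(x)\mathbbm{1}_{E_n}(y)|x-y|^\beta$ (after localizing using that no mass escapes), and $\nla$ with $\alpha\in(0,d)$ is continuous along such sequences because the kernel $|x-y|^{-\alpha}$ is locally integrable and the Riesz potential $\int_{E_n}|x-y|^{-\alpha}\dd y$ is equi-integrable given the uniform volume bound, so $\nla(E_n)\to\nla(E)$ (see \cite{FFMMM,BoCr14} for this standard argument). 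Combining, $\en(E)\leq\liminf_n\en(E_n)=\inf$, so $E$ is a minimizer.

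I expect the only genuinely delicate point to be the exclusion of dichotomy in the form stated: making precise the ``splitting at infinity'' alternative and extracting from it two well-separated pieces of positive volume, which is exactly where the growth $\beta>0$ of the attractive kernel is used. All other steps are routine lower semicontinuity/continuity facts available in the cited literature.
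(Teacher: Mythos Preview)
Your proposal is correct and follows essentially the same approach as the paper: both use the uniform perimeter bound to invoke the Frank--Lieb compactness result \cite{FrLi2015}, then exploit the growth of the attractive term $\nlb$ to show that two well-separated portions of positive mass would force $\nlb(E_n)\to\infty$, ruling out mass loss at infinity, and conclude via lower semicontinuity. The paper phrases the key step as excluding $|E_0|<V$ (rather than ``dichotomy'') and makes the separation estimate concrete via $\int_{E_k\cap B_{R_0}}\int_{E_k\setminus B_R}|x-y|^\beta\dd x\dd y \geq |R-R_0|^\beta \frac{V_0\eta}{8}$, but this is exactly your $c^2 D_n^\beta$ estimate in different notation.
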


\begin{proof}
    Let $(E_k)_k$ be a minimizing sequence for the problem \eqref{eq:min}. Then $E_k$ have uniformly bounded perimeter. Since $|E_k|=V$ for all $k\geq 1$, by \cite[Proposition~2.1]{FrLi2015}, there exists a set $E_0$ with positive measure $V_0$ and a sequence $(a_k)_k$ in $\R^d$ such that for a subsequence we have that $E_{n_k}-a_k \to E_0$ locally, i.e. $\chi_{(E_{n_k}-a_k)} \to \chi_{E_0}$ in $L^1_{\loc}(\R^d)$. Moreover, 
        \[
            V_0 = |E_0| \leq \liminf_{k\to\infty} |E_{n_k}| = V.
        \]

    Now, denoting $E_{n_k}-a_k$ again by $E_k$ with an abuse of notation, we assume for a contradiction that $V_0<V$. Fix $R_0>0$ such that $|E_0 \cap B_{R_0}| \geq V_0/2$, and let $\eta \defeq V-V_0$. Then for any $R>R_0$ there exists $K_R\in\N$ such that for any $k>K_R$, $|E_k\setminus B_R|\geq \eta/2$. Indeed, by the local convergence of $E_k$ to $E_0$,
        \[
            V_0 \geq |E_0 \cap B_R| = \lim_{k \to \infty} |E_k \cap B_R| = V - \lim_{k\to\infty} |E_k \setminus B_R|;
        \]
    hence, $\lim_{k\to \infty}|E_k \setminus B_R| \geq V-V_0=\eta$.

    Using the uniform bound on the energy and the positivity of the perimeter and the repulsive terms, for $k \geq K_R$, we obtain
        \begin{align*}
            C &\geq \en(E_k) \geq \int_{E_k} \int_{E_k} |x-y|^\beta \dd x \dd y \geq \int_{E_k \cap B_{R_0}}\int_{E_k \setminus B_R} |x-y|^\beta \dd x \dd y \\
                &\geq |R-R_0|^\beta \frac{V_0 \eta}{8}.
        \end{align*}
    Sending $R\to\infty$ this estimate yields the desired contradiction. Therefore $V_0=V$. This implies that no mass is lost in the limit $k \to \infty$. Hence, by the lower semicontinuity of the energy $\en$, $E_0$ is a minimizer.
\end{proof}

\medskip

\begin{proof}[Proof of Theorem~\ref{thm:globalmin}]
Let $d\geq 2$, $\alpha\in(0,d-1)$, and $\beta>0$. By \cite[Theorem~1.3]{FFMMM}, for any $m>0$ there exists $\ol{\e}(m)>0$ (also depending on $d$ and $\alpha$) such that if $\e > \ol{\e}(m)$, then the energy $\e\,\per(E) + m^{(d-\alpha+1)/d}\,\nla(E)$ is minimized uniquely (up to translations) by the unit ball $B_1$ among all sets of volume $\omega_d$. Since $\nlb(E)$ is an attractive energy, it is clear (for example via Riesz rearrangement inequality) that its global minimizer is also the ball. Therefore, among all measurable sets of volume $\omega_d$, the unique global minimizer (up to translations) of $\enm$ is given by the unit ball for any $m>0$ and $\e > \ol{\e}(m)$.

We define
$$
\epsglob(m) \defeq \inf \left\{ \e>0 \,\colon\, B_1 \text{ is a global minimizer of }\mathcal{F}_{\e',m} \text{ for all }\e'>\e \right\}.
$$
Clearly $\epsglob(m)\leq\ol{\e}(m)$ and $B_1$ is a global minimizer of $\enm$ for all $\e\geq\epsglob$. It is easy to see that, if $B_1$ globally minimizes $\enm$ for some $\e>0$, then it also minimizes $\mathcal{F}_{\e',m}$ for all $\e'>\e$. Therefore $B_1$ is not a global minimizer of $\enm$ if $\e<\epsglob$.

Now, let $V_{\rm ball}$ be the volume threshold in \cite[Theorem~1.1]{FraLie21} that depends only on $d$, $\alpha$, and $\beta$. Then, taking $m_{\rm ball} \defeq \left(\frac{V_{\rm ball}}{\omega_d}\right)$, we obtain that when $m>m_{\rm ball}$, the energy $\mathcal{F}_{0,m}=m^{(d-\alpha+1)/d}\, \nla(E)+m^{(d+\beta+1)/d}\,\nlb(E)$ is uniquely minimized, up to translations, among all measurable sets of volume $\omega_d$, by the unit ball $B_1$. Since the perimeter term is also minimized by the ball, again, the global minimizer of \eqref{eq:energy-scaled} is given by the unit ball for any $\e>0$ and for any $m>m_{\rm ball}$. In particular, $\epsglob(m)=0$ for all $m\geq m_{\rm ball}$.

We eventually show that $\epsglob(m)>0$ for $m\in(0,m_{\rm ball})$. Indeed, notice that if $B_1$ globally minimizes the functional $\mathcal{F}_{0,m}$ for some $m>0$, then it also minimizes $\mathcal{F}_{0,m'}$ for $m'>m$. It follows that the ball is not a global minimizer of $\mathcal{F}_{0,m}$ when $m\in(0,m_{\rm ball})$. Then necessarily $\epsglob(m)>0$ for such $m$.

Moreover, since $\epsglob(m)\leq \e(m)$, where $\e(m)$ is the stability threshold in Theorem~\ref{thm:stability}, we also have that $\epsglob(m)\to0$ as $m\to0^+$.
\end{proof}

\begin{remark}
While the ball of volume $V$ is not a minimizer of the nonlocal interaction energy $\nla+\nlb$ for any $V$ when $\alpha\in[d-1,d)$ (see \cite[Remark 3.2]{FraLie21}), the result in \cite{FFMMM} covers the full-range of $\alpha$-values; hence, the unit ball is also a minimizer of the energy $\enm$ for any $\e$ and for $m$ sufficiently small when $\alpha\in[d-1,d)$.
\end{remark}


\section{Numerical Insights}\label{sec:numerics}


In this final section we discuss the difficulties of obtaining general properties of the function $\e(m)$ when $0 < \beta <\beta_*$, and provide some numerical insight. At the root of these difficulties lies the fact that, in the case $0<\beta<\beta_*$, the supremum of the positive roots of the functions $\e_k$, i.e. $m_*=\sup_{k \geq 2} m_k^0$, is attained in the limit as $k\to\infty$ (see Remark~\ref{rmk:mstar2}). This implies that each curve $\e_k(m)$ crosses the $m$-axis at a point before $m_*$, that is, $m_k^0 < m_*$ for all $k\in\N$. As a consequence the value $\bar{k}(m)$ in Proposition~\ref{prop:eps-reg} where $\sup_{k\geq 1}\e_k(m)$ is attained switches infinitely many times. In other words, the curve $\e(m)$ is determined by an infinite number of cascading curves $\e_k(m)$ (see Figure~\ref{fig:betasmall}). More precisely, we claim that there exists values $m_2^c < \bar{m}_1 < \cdots <\bar{m}_j<\cdots<m_*$ with $\bar{m}_j \to m_*$ and an increasing subsequence $(k_j)_j$ such that
    \[
        \e(m) = \e_{k_j}(m) \qquad \text{for }  m \in (\bar{m}_j,\bar{m}_{j+1}).
    \]
Numerical plots indicate that for small values of $\beta$ \emph{all} curves $\e_k(m)$ may be active in defining the curve $\e(m)$ as depicted in Figure~\ref{fig:betasmall} (left), while as $\beta$ approaches $\beta_*$, some functions $\e_k$ are ``skipped'' in forming $\e(m)$ as in Figure~\ref{fig:betasmall} (right). This is not unexpected since when $\beta \geq \beta_*$ only the curves $\e_2$ and $\e_3$ are used in the profile of $\e$ while all other $\e_k$ with $k\geq 4$ are ``skipped''.

\begin{figure}
	\begin{center}
		\includegraphics[width=6.5cm]{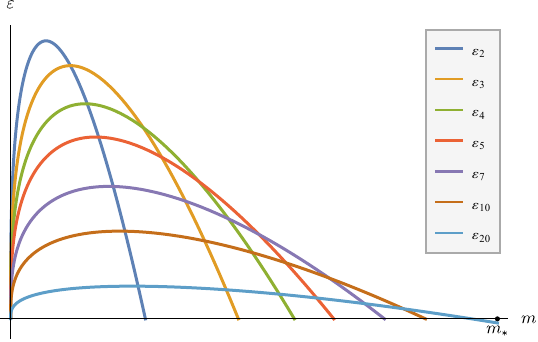}
		\hspace{1cm}
		\includegraphics[width=6.5cm]{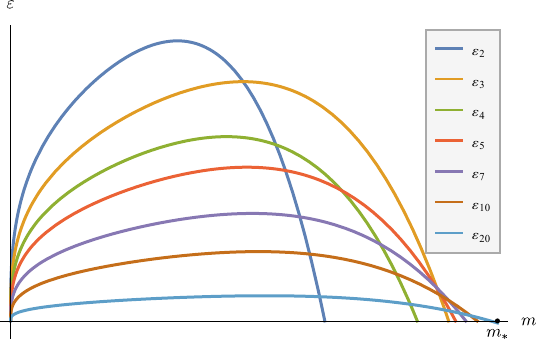}
	\end{center}
	\caption{Numerical plots of some of the functions $\e_k(m)$ in the case $\beta<\beta_*$. Left: $d=12$, $\alpha=9$, $\beta=4$. Right: $d=12$, $\alpha=9$, $\beta=40$. Here $\beta_*=86.5$. The picture on the right shows that some of the functions $\e_k$ can be ``skipped'' in the computation of the final profile $\e(m)$.}
	\label{fig:betasmall}
\end{figure}

The numerics suggest that, as in the case $\beta \geq \beta_*$, also when $0<\beta<\beta_*$ the function $\e(m)$ coincides with $\e_2(m)$ for $m\in[0,m_{2,3}]$. By Lemma~\ref{lemma:eps_2eps_3} and Lemma~\ref{lemma:A_k} it would suffice to show that $m_{3,k}>m_3^c$ to establish this claim. We further conjecture that $\e(m)$ is increasing for $m\in[0,m_2^c]$ and has a global maximum at $m_2^c$, where $m_2^c$ is the maximum point of the curve $\e_2$ (see Lemma~\ref{lem:basic_prop_eps_k}). In particular, numerics indicate that $\max_{m\in(0,m_*)} \e_k(m)$ is a decreasing sequence in $k$ for any $\beta>0$. 

Finally, we conjecture that the stability separation curve $\e(m)$ is decreasing on $(m_2^c,m_*)$. In order to establish this, it suffices to show one of the following two equivalent claims: (i) the curve $\e_{k+1}$ is decreasing at the intersection point of the curves $\e_k$ and $\e_{k+1}$ if it exists, i.e. $\e_{k+1}^\pr(m_{k,k+1})\leq 0$ for all $k\geq 3$, or (ii) the curves $\e_k$ and $\e_{k+1}$ intersect past the positive critical point of $\e_{k+1}$, i.e. $m_{k,k+1} \geq m_{k+1}^c$ for all $k\geq 3$. Both claims require one to establish that
\begin{equation} \label{eq:lambdak}
    \Lambda_k \defeq (d+\beta+1)\frac{A_k}{A_{k+1}} - (d-\alpha+1)\frac{B_k}{B_{k+1}} - (\alpha+\beta) > 0
\end{equation}
under the assumption $B_{k+1}<B_k$. Indeed, if $B_{k+1}>B_k$, then $\e_{k+1}(m)<\e_k(m)$ for all $m>0$ and $m_{k,k+1}$ does not exist. Proving \eqref{eq:lambdak}, however, does not seem to be an easy task, since numerics suggest that the monotonicity of this sequence changes as $\beta$ varies on $(0,\beta_*)$ (see Figure~\ref{fig:sequences}). 

\begin{figure}
	\begin{center}
		\includegraphics[width=6.5cm]{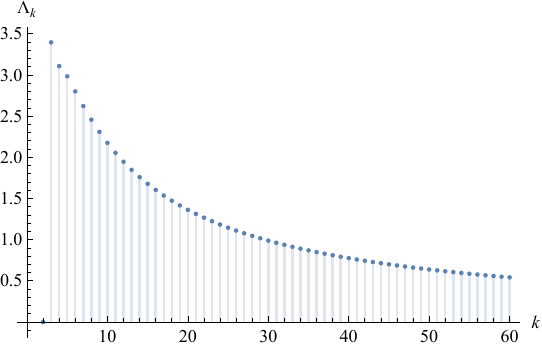}
		\hspace{1cm}
		\includegraphics[width=6.5cm]{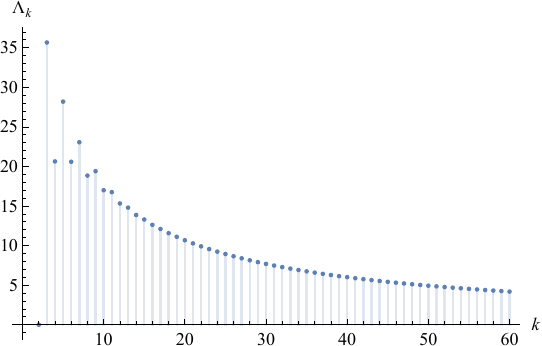}
	\end{center}
	\caption{Numerical plots of the sequence $\Lambda_k$ in \eqref{eq:lambdak}, whose positivity implies the inequality $m_{k,k+1}>m_{k+1}^c$. Left: $d=10$, $\alpha=8$, $\beta=10$. Right: $d=10$, $\alpha=8$, $\beta=130$. Here $\beta_*=134.$}
	\label{fig:sequences}
\end{figure}


 \bigskip
 \subsection*{Acknowledgments}
 MB is member of the GNAMPA group of INdAM.
 IT was partially supported by the Simons Foundation (851065), by the NSF-DMS (2306962), and by the Alexander von Humboldt Foundation.

\bibliographystyle{alpha}
\bibliography{references}

\newcommand{\etalchar}[1]{$^{#1}$}
\begin{thebibliography}{FFM{\etalchar{+}}15}

\bibitem[Asc22]{Asc22}
Giacomo Ascione.
\newblock A spherical rearrangement proof of the stability of a {R}iesz-type
  inequality and an application to an isoperimetric type problem.
\newblock {\em ESAIM Control Optim. Calc. Var.}, 28:Paper No. 4, 53, 2022.

\bibitem[BC14]{BoCr14}
Marco Bonacini and Riccardo Cristoferi.
\newblock Local and global minimality results for a nonlocal isoperimetric
  problem on {$\mathbb{R}\sp N$}.
\newblock {\em SIAM J. Math. Anal.}, 46(4):2310--2349, 2014.

\bibitem[BCT18]{BurChoTop18}
Almut Burchard, Rustum Choksi, and Ihsan Topaloglu.
\newblock Nonlocal shape optimization via interactions of attractive and
  repulsive potentials.
\newblock {\em Indiana Univ. Math. J.}, 67(1):375--395, 2018.

\bibitem[BCT24]{BonCriTop24}
Marco Bonacini, Riccardo Cristoferi, and Ihsan Topaloglu.
\newblock Stability of the ball for attractive-repulsive energies.
\newblock {\em SIAM J. Math. Anal.}, 56(1):588--615, 2024.

\bibitem[BT11]{BernoffTopaz}
Andrew~J. Bernoff and Chad~M. Topaz.
\newblock A primer of swarm equilibria.
\newblock {\em SIAM J. Appl. Dyn. Syst.}, 10(1):212--250, 2011.

\bibitem[CMT17]{ChoMurTop17}
Rustum Choksi, Cyrill~B. Muratov, and Ihsan Topaloglu.
\newblock An old problem resurfaces nonlocally: {G}amow's liquid drops inspire
  today's research and applications.
\newblock {\em Notices Amer. Math. Soc.}, 64(11):1275--1283, 2017.

\bibitem[CR25]{ChoRuo25}
Otis Chodosh and Ian Ruohoniemi.
\newblock On minimizers in the liquid drop model.
\newblock {\em Comm. Pure Appl. Math.}, 78(2):366--381, 2025.

\bibitem[FFM{\etalchar{+}}15]{FFMMM}
Alessio Figalli, Nicola Fusco, Francesco Maggi, Vincent Millot, and
  Massimiliano Morini.
\newblock Isoperimetry and stability properties of balls with respect to
  nonlocal energies.
\newblock {\em Comm. Math. Phys.}, 336(1):441--507, 2015.

\bibitem[FHK11]{FetecauHuangKolokolnikov}
Razvan~C. Fetecau, Yanghong Huang, and Theodore Kolokolnikov.
\newblock Swarm dynamics and equilibria for a nonlocal aggregation model.
\newblock {\em Nonlinearity}, 24(10):2681--2716, 2011.

\bibitem[FL15]{FrLi2015}
Rupert~L. Frank and Elliott~H. Lieb.
\newblock A compactness lemma and its application to the existence of
  minimizers for the liquid drop model.
\newblock {\em SIAM J. Math. Anal.}, 47(6):4436--4450, 2015.

\bibitem[FL18]{FraLie18}
Rupert~L. Frank and Elliott~H. Lieb.
\newblock A ``liquid-solid'' phase transition in a simple model for swarming,
  based on the ``no flat-spots'' theorem for subharmonic functions.
\newblock {\em Indiana Univ. Math. J.}, 67(4):1547--1569, 2018.

\bibitem[FL21]{FraLie21}
Rupert~L. Frank and Elliott~H. Lieb.
\newblock Proof of spherical flocking based on quantitative rearrangement
  inequalities.
\newblock {\em Ann. Sc. Norm. Super. Pisa Cl. Sci. (5)}, 22(3):1241--1263,
  2021.

\bibitem[FN21]{FraNam21}
Rupert~L. Frank and Phan~Th\`anh Nam.
\newblock Existence and nonexistence in the liquid drop model.
\newblock {\em Calc. Var. Partial Differential Equations}, 60(6):Paper No. 223,
  12, 2021.

\bibitem[Fra23]{Fra}
Rupert~L. Frank.
\newblock Some minimization problems for mean field models with competing
  forces.
\newblock In {\em European {C}ongress of {M}athematics}, pages 277--294. EMS
  Press, Berlin, 2023.

\bibitem[Gam30]{Ga1930}
George Gamow.
\newblock Mass defect curve and nuclear constitution.
\newblock {\em Proc. R. Soc. Lond. A}, 126(803):632--644, 1930.

\bibitem[Gro96]{Gro}
Helmut Groemer.
\newblock {\em Geometric applications of {F}ourier series and spherical
  harmonics}, volume~61 of {\em Encyclopedia of Mathematics and its
  Applications}.
\newblock Cambridge University Press, Cambridge, 1996.

\bibitem[KM14]{KnMu2014}
Hans Kn{\"u}pfer and Cyrill~B. Muratov.
\newblock On an isoperimetric problem with a competing nonlocal term {II}:
  {T}he general case.
\newblock {\em Comm. Pure Appl. Math.}, 67(12):1974--1994, 2014.

\bibitem[RWW24]{RenWanWei}
Xiaofeng Ren, Chong Wang, and Juncheng Wei.
\newblock A geometric variational problem with logrithmic-quadratic
  interaction.
\newblock {\em Preprint}, 2024.

\bibitem[TBL06]{TopazBertozzi2}
Chad~M. Topaz, Andrea~L. Bertozzi, and Mark~A. Lewis.
\newblock A nonlocal continuum model for biological aggregation.
\newblock {\em Bull. Math. Biol.}, 68(7):1601--1623, 2006.

\end{thebibliography}

\end{document}